\documentclass[11pt]{article}

\usepackage[right=1in, left=1in, top=1in, bottom=1in]{geometry}

\usepackage{graphicx} 
\usepackage{amsmath,enumitem,xcolor,amssymb,amsthm}
\usepackage[dvipsnames]{xcolor}

\usepackage{hyperref}
\hypersetup{
    colorlinks=true,
    linkcolor=blue,
    filecolor=magenta,      
    urlcolor=cyan,
    breaklinks=true,
    }
\usepackage{xurl}
\usepackage{doi}

\newtheorem{theorem}{Theorem}[section]
\newtheorem{prop}[theorem]{Proposition}
\newtheorem{lemma}[theorem]{Lemma}
\newtheorem{claim}[theorem]{Claim}

\newtheorem{cor}[theorem]{Corollary}
\newtheorem{obs}[theorem]{Observation}
\newtheorem{dfn}[theorem]{Definition}

\let\epsilon=\varepsilon

\begin{document}

\title{On Balance, To What Degree is Burr's Conjecture True?} 
\author{Shagnik Das\footnote{Department of Mathematics, National Taiwan University, Taipei 10617, Taiwan. Email: \texttt{shagnik@ntu.edu.tw}. Supported by NSTC Grant 113-2628-M-002-008-MY4.} 
\and 
Bruce Reed\footnote{Institute of Mathematics, Academia Sinica, Taipei 106319, Taiwan. Email: \texttt{bruce.al.reed@gmail.com}. Supported by NSTC Grant 112-2115-M-001-013-MY3.} 
\and 
Jozef Skokan\footnote{London School of Economics, Department of Mathematics, London, WC2A 2AE, UK; Email: \texttt{j.skokan@lse.ac.uk}.} }
\date{\today}

\maketitle
\begin{abstract}
    For many trees $T$, the Ramsey number of $T$, denoted by ${\mathcal R}(T)$, is determined by the sizes of the partition classes in its unique bipartition. In 1976, Burr proved that when $T$ has partition classes of size $t_1$ and $t_2$ with $t_1 \le t_2$, the Ramsey number is at least $\max(2t_2-1,2t_1+t_2-1)$, and conjectured that this is tight. While counterexamples have been found for some pairs $(t_1, t_2)$, a main focus of research on this problem has been determining ratios $t_2/t_1$ or bounds on the maximum degree of $T$ for which Burr's bound is either exactly or asymptotically tight.
    
    We essentially resolve these questions for lopsided trees. Specifically, we show that (a) there are counterexamples whenever $t_2 \ge 2t_1$, with the order of magnitude of the difference between the largest Ramsey numbers and Burr's bound being $\max \left( t_1^2/t_2, \sqrt{t_1} \right)$, and (b) for $t_2 \ge 500 t_1$, Burr's bound is tight when $\Delta(T) \le t_2 - t_1$, but is off by at least $C \log t_2$ (even when $t_2 \ge 2 t_1$) when $\Delta(T) \gtrsim t_2 - t_1$. In particular, this shows that Burr's bound need not hold for $t$-vertex trees $T$ with $\Delta(T) \approx t/3$.
\end{abstract}

\thispagestyle{empty}
\section{Introduction }

The Ramsey number of a tree $T$, ${\cal R}(T)$  is the minimum $n$ such that every two colouring of the edges of clique $K_n$ on $n$ vertices yields a monochromatic copy of $T$. This graph parameter has been studied intensely over the years, and varies greatly depending on the structure of the tree in question. For instance, it is known that the Ramsey number of the path $P_t$ on $t$ vertices is $\lfloor \frac{3t-2}{2} \rfloor$ (c.f.~\cite{GG}), while Harary~\cite{Ha} noted that the Ramsey number of the $(t-1)$-star $K_{1, t-1}$ (which has $t$ vertices) comes from colourings of $K_n$ where each monochromatic subgraph has maximum degree $t-2$, which implies ${\cal R}(K_{1,t-1}) = 2t-2$ when $t$ is even and ${\cal R}(K_{1,t-1}) = 2t-3$ when $t$ is odd. The large discrepancy between these two Ramsey numbers is explained by the fact that the partition classes in the unique bipartition of the star are as unbalanced as possible while those in a path are as balanced as possible. 

For a tree $T$ we let $I_1=I_1(T)$ and $I_2=I_2(T)$ be the two partition classes in its unique bipartition labelled so that $|I_1|\le|I_2|$. Trees with $|I_j|=t_j$, $j=1,2$,  are called {\it $(t_1,t_2)$-trees}. 
We define $$f(t_1, t_2) = \max \{ {\cal R}(T): T \text{ is a } (t_1, t_2)\text{-tree} \}.$$
In  1974, Burr \cite{Burr} conjectured that  the Ramsey number of a tree $T$ with $t$ nodes is determined solely by the balance between the two partition classes, i.e. that every $(t_1,t_2)$-tree $T$ satisfies $R(T)=f(t_1,t_2)$, where 
\begin{equation}\label{eq:burr}
f(t_1,t_2)=\max(2t_2-1, 2t_1 + t_2-1).
\end{equation}

Burr presented two colourings that show his conjectured bound~\eqref{eq:burr} is a lower bound on the Ramsey number of every $(t_1,t_2)$-tree $T$. The first colouring consists of two disjoint blue cliques of size $t_2-1$, with all edges in between coloured red. This clearly does not contain a blue copy of $T$, as it has no monochromatic blue component of size $t$. 
It also does not contain a red copy of $T$, because the red subgraph is bipartite and neither colour class has $t_2$ vertices.  Burr's second colouring is the disjoint union of a blue clique of size $t_1-1$ and a blue clique of size $t_2+t_1-1$, with all crossing edges again coloured red. This clearly does not have a blue copy of $T$, because the blue components are too small. There is also no red copy of $T$, as the red subgraph is bipartite with one part having fewer than $\min(t_1,t_2)$ vertices. 
Thus ${\cal R}(T) \ge \min(2t_2-1,2t_1+t_2-1)$. 

The reader may have observed that simply considering Harary's observation about stars above, 
Burr's conjecture \eqref{eq:burr} is off by an additive factor of 1 for any $(t_1,t_2)$ with $t_2 \ge 2t_1$. In 1979, Grossman, Harary and Klawe \cite{ghk79} extended Harary's upper bound to the class of {\it double stars}, which are trees obtained by adding an edge between the roots
of two stars.\footnote{We note that a star is a double star where one substar is a singleton.}   They showed that provided $t_2 \ge 3t_1-2$, the conjectured bound~\eqref{eq:burr} is tight for a double star if $t$ is even, and off by one if $t$ is odd.

In 1982, Erd\H{o}s, Faudree, Rousseau, and Schelp~\cite{EFRS} asked whether Burr's bound was correct for $t_2=2t_1$. In 2016, Norin, Sun, and Zhao~\cite{NSZ} disproved Burr's conjecture in this case by  showing that for $t_2=2t_1$ the Ramsey number of a double star may be as high as $4.2t_1$,  5\% greater than Burr's conjectured bound.\footnote{In \cite{DS}, Dubo and Stein   showed this Ramsey number is no greater than $4.275t_1$.} Their results also showed that Burr's bound was off by a constant multiplicative factor for $t_2=\eta t_1$ whenever $\frac{7}{4} < \eta < \frac{105}{41}$. 

Two attempts to rescue Burr's conjecture on which we focus are restricting the ratio $t_2/t_1$, and restricting the maximum degree of $T$.\footnote{We note that the maximum degree of a double star is $t_2$, which is the maximum possible degree of a $(t_1,t_2)$-tree.} We will also be interested in determining just how far the Ramsey number can be from Burr's conjectured bound~\eqref{eq:burr} given these types of restrictions.

The result of Norin et al. left open the possibility that Burr's conjecture is true up to a constant additive error when $t_2 \ge \frac{105t_1}{41}$ or $t_2 \le \frac{7t_1}{4}$. One of our results is to show that this is not true for $t_2 \ge \frac{105t_1}{41}$ unless $t_1=O(1)$. Moreover, we determine the error term in terms of $t_1$ up to a multiplicative factor. 

\begin{theorem}
 \label{maintheorem}
    For any $t_2 \ge 2t_1$, we have $f(t_1, t_2) = 2t_2 + \Theta \left( \max \left( \frac{t_1^2}{t_2}, \sqrt{t_1} \right) \right)$.
\end{theorem}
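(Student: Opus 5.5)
Since $t_2\ge 2t_1$, Burr's bound \eqref{eq:burr} equals $2t_2-1$. Theorem~\ref{maintheorem} therefore splits into a lower bound and an upper bound, each of order $\max(t_1^2/t_2,\sqrt{t_1})$ beyond $2t_2$. We plan to exhibit a single well-chosen $(t_1,t_2)$-tree with large Ramsey number for the lower bound, and to prove a structural upper bound that applies to every $(t_1,t_2)$-tree.

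\textbf{Lower bound.} We take $T$ to be a double star, or a close variant: a vertex of $I_1$ of degree about $t_2-t_1$, together with a small "spider" part absorbing the remaining vertices. The $\sqrt{t_1}$ term should come from a Harary-type degree obstruction. Take $n=2t_2+c\sqrt{t_1}$ and colour $K_n$ so that the blue graph is nearly regular, of degree just below the degree needed at the centre. Then every vertex has red degree about $t_2+c\sqrt{t_1}$, but the red neighbourhoods of two adjacent vertices overlap heavily. Embedding a double star with centres $u,v$ needs $|N(u)\cup N(v)|$ to be large, which fails. To make the overlaps controlled we would use a strongly regular or random-like structure, for instance a blow-up of a $K_m$ edge colouring with parts of size about $\sqrt{t_1}$. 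The $t_1^2/t_2$ term should instead generalise the Norin--Sun--Zhao construction. We partition the vertex set into a few blocks whose sizes are tuned, to first order in $t_1/t_2$, so that each monochromatic component is either too small or bipartite-like with a class of size below $t_2$. The second-order gain comes from letting a block of size about $t_1$ have its internal colours chosen so that both stars of the double star are blocked simultaneously. We would optimise the block sizes and expect a surplus of order $t_1\cdot(t_1/t_2)$.

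\textbf{Upper bound.} Fix any $(t_1,t_2)$-tree $T$ and set $n=2t_2+C\max(t_1^2/t_2,\sqrt{t_1})$. We would argue through the standard route:
\begin{enumerate}[label=(\roman*)]
\item Some colour, say red, contains a component that is dense and large.
\item Take a bipartite-type structure in that component, with both sides of size at least $t_2$ after cleaning, or else a near-complete red piece.
\item Embed $T$ greedily: place $I_1$ on the side where vertices have high degree, and fill $I_2$ using leaves.
\end{enumerate}
The key lemma is that a tree with $t_2\ge 2t_1$ has at least $t_2-t_1$ leaves in $I_2$. Hence we can embed the non-leaf "skeleton" (of size at most $2t_1$) first and then attach the leaves by a Hall-type matching argument. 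The Hall condition fails only if many vertices have degree deficient by roughly $t_1^2/t_2$ or $\sqrt{t_1}$, and such a deficiency produces, in the other colour, a dense structure into which we embed $T$ directly.

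\textbf{Main obstacle.} We expect the hardest step to be matching the exact error terms in the upper bound, in particular the $\sqrt{t_1}$ term when $t_1^2/t_2$ is small. Here the leaf-assignment Hall argument must be quantitatively sharp. We would first try a counting argument: bound the number of pairs of vertices with red neighbourhood intersection exceeding $t_2-t_1$, and use double counting of monochromatic cherries (a Goodman-type count) to show that the deficiencies sum to at most $O(\sqrt{t_1})$ per vertex on average. The exponent $1/2$ should arise from this quadratic counting.
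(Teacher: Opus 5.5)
\textbf{Lower bound.} Your lower bound uses the wrong trees. By the Grossman--Harary--Klawe result quoted in the introduction, every double star with $t_2\ge 3t_1-2$ has Ramsey number at most $2t_2$. So in that range no colouring, whether Harary-type or an extension of Norin--Sun--Zhao (whose counterexamples cover only $\frac74<t_2/t_1<\frac{105}{41}$), gives a double star a surplus that grows with $t_1$. Your variant with top degree about $t_2-t_1$ also fails: by Theorem~\ref{maintheorem1}, once $t_2\ge 500t_1$ every tree with $\Delta(T)\le t_2-t_1$ has $\mathcal{R}(T)=2t_2-1$. Your degree obstruction is in the wrong colour as well. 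If every vertex has red degree $t_2+c\sqrt{t_1}$, the $t_2$-star fits in red at every vertex.

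The missing idea is the deranged star. Its root is adjacent to \emph{all} of $I_2$, and the remaining $t_1-1$ vertices of $I_1$ hang below \emph{many} distinct neighbours of the root: $m=t_1-1$, $k=1$ for the $t_1^2/t_2$ term, and $m=k\approx\sqrt{t_1}$ for the $\sqrt{t_1}$ term. The colouring is two blue cliques of size $t_2+\varepsilon$ joined by a sparse blue bipartite graph, with all other crossing edges red. Each vertex has enough blue crossing edges that the red graph, bipartite between the cliques, has maximum degree below $t_2$, so red cannot host the root. In blue, the tree has $t_1+t_2$ vertices, so at least $t_1-\varepsilon$ of them must lie in the opposite clique. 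But only $(m+k+1)\Delta$ of them are reachable, or $(k+1)(a+\Delta)$ if the crossing edges are funnelled through sets $A_i$ of size $a$. Taking $\Delta\approx\frac12\sqrt{t_1}$, or $a=\Delta=t_1/5$, gives the two terms. In this colouring a double star does embed in blue as soon as its root has one blue crossing edge, which is exactly why many branching neighbours are needed.

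\textbf{Upper bound.} Your upper bound is an outline whose decisive step is only asserted. The sentence ``such a deficiency produces, in the other colour, a dense structure into which we embed $T$ directly'' is essentially all of Section~\ref{secub}. The paper first proves Lemma~\ref{commonlem}: any colouring without a monochromatic $T$ is close to two blue cliques $C,D$ with red between them and $\Delta(G_b[C,D])\le t_1$. It then splits according to whether $T$ has a center, i.e.\ a vertex with more than $t_2/4$ leaf neighbours.

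Your step (iii), placing $I_1$ greedily on a high-degree side, does not handle trees with a center, and these include the extremal deranged stars. There the center must go to a vertex $v$ of degree at least $t_2$, while about $t_1$ other vertices of $T$ are kept outside $N(v)$. This is where both error terms come from. The $t_1^2/t_2$ term comes from averaging: fewer than $t_1$ vertices of $D$ each have fewer than $t_1$ blue edges to $C$, so a typical vertex of $C$ sends only about $t_1^2/|C|\approx t_1^2/t_2$ blue edges into them.

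The $\sqrt{t_1}$ term comes from the center case. Since $\Delta(G_r)<t_2$, the image $v\in C$ of $c$ has at least $10\sqrt{t_1}$ blue neighbours in $D$. Placing there the roots of the largest components of $T-c$ either puts $t_1$ non-neighbours of $c$ into $D$, or forces every remaining component to have size $O(\sqrt{t_1})$, small enough to route through further blue crossing edges. This trade-off between the number of branches and their size mirrors the lower-bound construction. A Goodman-type cherry count has no evident link to it, and you give no argument that it yields the required bound.
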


One feature common to these counterexamples to Burr's conjecture --- both the double stars and our own constructions --- is that they feature vertices of very high degree. In particular, we have vertices of degree $t_2$. On the other hand, it has been proved that Burr's conjecture is valid for trees of bounded maximum degree. The first such result is due to  Haxell, {\L}uczak, and Tingley~\cite{HLT}, who showed that for every $\epsilon>0$ there is a $c$ such that if $T$ is a $(t_1,t_2)$-tree with maximum degree at most $ct_2$, then ${\cal R}(T) \le (1+\epsilon)\max(2t_2,t_2+2t_1)$.

Building on this work, Montgomery, Pavez-Sign\'{e}, and Yun~\cite{MPY} answered a question of Stein~\cite{S} and proved Burr's bound was tight for large enough trees of bounded degree. That is, they showed that there is a $c>0$ such that if $T$ is a sufficiently large $(t_1,t_2)$-vertex tree with maximum degree at most $c(t_1+t_2)$, then ${\cal R}(T)=\max(2t_2-1,t_2+2t_1-1)$, as Burr conjectured.

Their proofs make use of the Szemer\'edi Regularity Lemma, and as a result the constant $c$ is necessarily very small. It is natural to ask what the largest $\Delta$ is such that Burr's conjecture holds for all trees of maximum degree $\Delta$. In our next result, we essentially determine this threshold for very lopsided trees, surprisingly showing that the maximum degree can be as large as $t_2 - t_1$.

\begin{theorem}
 \label{maintheorem1}
 If $t_2 \ge 500 t_1$, then for every $(t_1, t_2)$-tree $T$ satisfying $\Delta(T) \le t_2 - t_1$, we have ${\cal R}(T) = 2t_2 - 1$. 
 On the other hand, given any $t_2 \ge 2t_1$, there is a $(t_1, t_2)$-tree $T$ with $\Delta(T) \le t_2 - t_1 +\tilde{O}(t_2^{2/3})$ and $\mathcal{R}(T) \ge 2t_2 + \Omega( \log t_2)$.
 \end{theorem}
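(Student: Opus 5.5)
The theorem has two parts. For the upper bound, Burr's first colouring already gives $2t_2-1$. So the task is to show that every $2$-colouring of $K_n$ with $n=2t_2-1$ contains a monochromatic copy of any $(t_1,t_2)$-tree $T$ with $\Delta(T)\le t_2-t_1$ when $t_2\ge 500t_1$. For the lower bound we need a construction.

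\textbf{Upper bound.} Fix a colouring and suppose it has no monochromatic $T$.

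First, use the lopsidedness. Since $t_1\le t_2/500$, $T$ consists of a small class $I_1$ together with $t_2$ vertices of $I_2$, most of which are leaves attached to $I_1$. Let $T'$ be the subtree obtained by deleting all leaves in $I_2$. It has at most $2t_1$ vertices, and $T$ is recovered by giving each $v\in I_1$ exactly $d_v$ private leaves, where $\sum_v d_v\le t_2$ and every $d_v\le t_2-t_1$.

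The embedding strategy has two stages:
\begin{enumerate}
\item Find a colour, say red, and a set $A$ of vertices of red degree at least roughly $t_2+2t_1$. Embed $T'$ in red, placing $I_1$ inside $A$.
\item Attach the leaves by a Hall-type matching argument. The condition to check is that for every $S\subseteq I_1$, the images of $S$ have at least $\sum_{v\in S}d_v+|T'|$ red neighbours in total.
\end{enumerate}

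Next, split into cases by degree. The average degree is $n-1=2t_2-2$, so for each vertex one of its two colour degrees is at least $t_2-1$.

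Consider first the case where some colour, say red, has many vertices (at least $3t_1$, say) of red degree at least $t_2+2t_1$. Then I embed $T'$ greedily inside the red graph restricted to high-degree vertices. This needs connectivity and some minimum-degree structure, which I get from a cleaning argument. The leaves are then attached using Hall's condition. The bound $d_v\le t_2-t_1$ is exactly what makes a single vertex of degree around $t_2$ suffice: one vertex must host up to $t_2-t_1$ leaves while avoiding the $\le 2t_1$ already-used vertices.

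Otherwise almost all vertices have both colour degrees close to $t_2$. In this case I expect stability: the colouring must be close to Burr's first extremal colouring, two blue cliques of size about $t_2$ with red between them, or its complement. Note that each colour degree is at least about $t_2-2t_1$ and the colour degrees of a vertex sum to $2t_2-2$. In the near-extremal structure, either
\begin{itemize}
\item there is a blue near-clique with at least $t_2+t_1$ vertices, which contains $T$ directly, or
\item the red near-bipartite graph has a side of size at least $t_2$; then $T$ embeds with $I_2$ on the larger side, using the slack given by $\Delta(T)\le t_2-t_1$ to handle defect vertices.
\end{itemize}

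I expect the main obstacle to be this stability and cleanup step: handling exceptional vertices whose degrees lie between the two regimes, without losing more than $O(t_1)$. The constant $500$ leaves room for crude estimates here.

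\textbf{Lower bound.} I will adapt the counterexample constructions from Theorem~\ref{maintheorem}, but with a tree whose high-degree vertices are capped at $t_2-t_1+\tilde O(t_2^{2/3})$. The natural candidate is a tree $T$ whose class $I_1$ contains about $k=\Theta(\log t_2)$ hub vertices. Each hub has degree close to $t_2-t_1+\tilde O(t_2^{2/3})$ and the remaining vertices are small.

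On $n=2t_2+c\log t_2$ vertices, take a random-like colouring: two sets of size about $t_2$, with each vertex's colour degrees balanced so that no vertex has monochromatic degree reaching the largest hub degree plus the needed slack. The colouring is built so that the common neighbourhoods of any $k$ vertices are too small to host the shared structure. This is where the $\log t_2$ comes from, via a union bound over $k$-sets in the spirit of Erdős's bound. The $t_2^{2/3}$ term absorbs concentration errors.

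Verifying that no monochromatic copy exists requires a counting argument over where the hubs can be placed. I would prove this by showing that any embedding forces some vertex to have monochromatic degree at least $t_2-t_1+\tilde O(t_2^{2/3})$ with too few available neighbours, which gives a contradiction.
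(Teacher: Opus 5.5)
\textbf{Upper bound.} Your first case matches the paper's observation: a colour with $\Omega(t_1)$ vertices of degree at least about $\frac n2+2t_1$, where you embed $T-L_2$ and finish with Hall. Everything after that is only asserted, and that is where the proof lies.

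Colour degrees all near $t_2$ do not by themselves force closeness to Burr's colouring; a quasirandom colouring has the same degree profile. The paper proves a separate stability lemma by showing that otherwise $T$ embeds. It splits on whether $T$ has a \emph{center}, i.e.\ an $I_1$-vertex with more than $t_2/4$ leaf children. A center is placed at a vertex $v$ of large degree, with the rest of $I_1$ kept outside $N(v)$. Without a center, $I_1$ is paired up by leaf counts, and each pair is placed at vertices whose neighbourhoods have large union, so Hall's condition holds.

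Your endgame is also wrong as described. The resulting parts have sizes within about $t_1$ of $n/2=t_2-\frac12$, so $|C|\le t_2-1<t_2\le|D|<t_2+t_1$. There is no blue part of size $t_2+t_1$. In red there is no slack: when $|D|=t_2$, the image of $I_2$ must be all of $D$, including vertices incident to blue crossing edges.

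This tight situation is exactly where $\Delta(T)\le t_2-t_1$ is needed. In your first case it plays no role, since every tree embeds there. The paper puts a center $c$ at the vertex $v\in C$ with the maximum number $d$ of blue neighbours in $D$. If few components of $T-c$ contain two $I_2$-vertices, the degree bound yields $d$ components with at least $t_1+2d$ vertices in total. These are sent into $D$ along the blue crossing edges at $v$, giving a blue copy. Otherwise $T$ embeds in red, with the $d$ blue neighbours of $v$ in $D$ covered by $I_2$-vertices of such components that are not adjacent to $c$. Without a center, a random top-down embedding plus Hoeffding's inequality makes every vertex of $D$ red-adjacent to images of parents of enough leaves for Hall's condition.

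\textbf{Lower bound.} The proposed tree does not exist. Two vertices of a tree share at most one neighbour, so three $I_1$-vertices of degree about $t_2-t_1$ would need at least $3(t_2-t_1)-3>t_2$ vertices of $I_2$ when $t_2\ge 2t_1$. Nor can the obstruction come from a single vertex's monochromatic neighbourhood. On $n\ge 2t_2$ vertices every vertex has degree at least $t_2$ in some colour, and here $\Delta(T)<t_2$.

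The missing idea is a \emph{pairwise} obstruction. The paper takes $T(\ell,m,k,s)$ with $\ell=t_2-t_1+x-1$, $m=t_1-2$, $k=1$, $s=t_1-x$. In this tree, two $I_1$-vertices $r_1,r_2$ at distance two are together adjacent to all of $I_2$. It colours $K_{2(a+b)}$ as two blue cliques of size $a+b=t_2+\tfrac12ap^2-2$. Crossing edges meeting small sets $A_i$ of size $a=x/5$ are blue independently with probability $p=x/(8t_2)$, and all other crossing edges are red.

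Every same-side pair then has at least $\lfloor\tfrac12ap^2\rfloor$ common blue crossing neighbours. So the union of their red neighbourhoods has fewer than $t_2$ vertices, and there is no red copy. In blue, the $(\ell,m,1)$-deranged star around $r_1$ has $t_2+x-2$ vertices. The root's clique has only $t_2+\tfrac12ap^2-2$, and at most about $0.9x$ of the star's vertices can lie in the other clique.

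The $\log t_2$ comes from the union bound over pairs, which needs $ap^2\gtrsim\log t_2$. The $t_2^{2/3}$ comes from balancing this against the blue constraint $a+t_2p\lesssim x$, which forces $ap^2\lesssim x^3/t_2^2$. It is the cost of the construction, not a concentration error term.
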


We prove our lower bounds on the Ramsey number of ($t_1,t_2)$-trees in Section~\ref{seclb}, our upper bounds in Section~\ref{secub}, and then make some concluding remarks in Section~\ref{seccr}.
 
\section{The Lower Bounds}
\label{seclb} 
In this section, we will prove our lower bounds on the Ramsey numbers of trees, which in particular serve as counterexamples to Burr's conjecture. The following result provides the lower bound in Theorem~\ref{maintheorem}.

\begin{prop} \label{prop:main-lower-bound}
     For every sufficiently large $t_1$ and $t_2$ with  $t_2 \ge 2t_1$, there exists a $(t_1,t_2)$-tree $T$ with 
    ${\mathcal R}(T) \ge 2 t_2 + \max\left(\frac{2 t_1^2}{25 t_2},\sqrt{t_1} - 5 \right)$.
\end{prop}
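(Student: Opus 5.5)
The plan is to exhibit, for each of the two terms in the maximum separately, a $(t_1,t_2)$-tree $T$ together with a $2$-colouring of $K_n$, $n = 2t_2 + k - 1$ with $k$ the relevant term, containing no monochromatic $T$. In both regimes I would take $T$ to be a ``broom-like'' tree of very high degree. It has a root $r \in I_1$ adjacent to $t_2 - m$ leaves of $I_2$ and to $m$ further vertices $u_1,\dots,u_m \in I_2$. The remaining $t_1 - 1$ vertices of $I_1$ hang below the $u_j$, and each carries a few pendant leaves so that $|I_2| = t_2$. The parameter $m$ and the distribution of the $I_1$-vertices among the $u_j$ are tuned at the end.

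The point of this shape is that any embedding of $T$ in, say, the red graph $G_R$ needs two things. First, $r$ must have red degree at least $t_2$, since all of $I_2 \cap N_T(r)$ plus the $u_j$ lie in $N_R(r)$. Second, there must be $t_1 - 1$ further vertices at red distance two from $r$ whose red neighbourhoods, after removing what is already used, still supply the remaining leaves. So the colouring should make each colour class nearly regular of degree about $(n-1)/2 = t_2 + (k-2)/2$. Then every vertex barely has enough room for $r$'s neighbourhood, and the ``spare'' room of about $k/2$ is too small to accommodate the second layer.

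\textbf{Term $2t_1^2/(25t_2)$.} I would use a blow-up colouring in the spirit of Norin--Sun--Zhao. Partition the vertex set into a bounded number of parts of carefully chosen sizes, colour inside parts blue and between designated pairs red (or according to a small pattern), so that both colours are nearly $n/2$-regular. The key feature to arrange is that the red (respectively blue) second neighbourhood of any vertex $v$ is confined to parts in which every vertex has few neighbours outside $N(v)$. Counting then shows the $I_1 \setminus\{r\}$ vertices cannot all find fresh leaves: each lost leaf costs about $t_1/t_2$ in the budget, and there are about $t_1$ such vertices. This gives a deficit quadratic in $t_1$ over $t_2$, and the constant $2/25$ comes from optimising part sizes.

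\textbf{Term $\sqrt{t_1} - 5$.} This applies when $t_1 \lesssim t_2^{2/3}$. Here I would instead use a pseudo-random, nearly regular colouring (Paley-type, or a random nearly-regular one), where every two vertices have about $n/4$ common neighbours in each colour. Choose $T$ so that the $t_1 - 1$ lower vertices each need about $\sqrt{t_1}$ leaves outside $N(r)$. The available outside space is about $n/2$ minus the overlap, which forces a shortfall once $k \lesssim \sqrt{t_1}$.

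The main obstacle is the exact counting in the first construction: showing that \emph{no} placement of $r$ and the $u_j$ works. In particular, one must rule out embeddings with $r$ in an unexpected part, or with leaves of $r$ shared cleverly. I would handle this by a case analysis over which part contains $r$, bounding in each case the number of usable second-layer vertices via a Hall-type deficiency count.
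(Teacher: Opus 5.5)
Your tree is essentially right, but the colourings, which are where all the content lies, are unspecified, and the two kinds you propose cannot work. On the tree: if $r$ is adjacent to all $t_2$ vertices of $I_2$, the lower $I_1$-vertices must be leaves, so ``each carries a few pendant leaves'' is inconsistent. What you describe is the paper's $(t_2,m,k)$-deranged star with $mk=t_1-1$.

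\emph{The pseudo-random colouring fails.} For the $\sqrt{t_1}-5$ term, since $n-1\ge 2t_2-1$, every vertex $v$ has degree at least $t_2$ in some colour. Each neighbour of $v$ in that colour has about $n/4\approx t_2/2\gg t_1$ neighbours outside $N[v]$. So Hall's condition for the $t_1-1$ grandchildren holds trivially and $T$ embeds. Your ``shortfall'' compares a demand of order $t_1$ with room of order $n/2$.

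\emph{The bounded blow-up fails when $t_1=o(t_2)$.} The first term still dominates for $t_2^{2/3}\ll t_1\ll t_2$. In that range a $T$-free colouring looks like two near-blue cliques of size about $n/2$ with red between them (Lemma~\ref{commonlem}). To keep the root out of the nearly complete bipartite red graph, every vertex on one side needs blue edges into the other clique. Yet no vertex may have $t_1-1$ such edges, or it hosts a blue $T$: put the branching children and their children across, and the root's leaves at home. With boundedly many parts and monochromatic pairs, some part of size $\Omega(t_2)$ would be completely blue to a part across. That gives its vertices $\Omega(t_2)\gg t_1$ blue cross edges.

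\emph{Your blocking mechanism is also not the operative one.} In the paper's colourings every vertex has a blue cross neighbour. Hence the blue second neighbourhood of every vertex is the whole vertex set, not something confined.

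The missing idea is the paper's $(a,b;\delta,\Delta,\gamma)$-colouring (Definition~\ref{dfn:colourings}). It has two blue cliques $C_1,C_2$ of size $n/2$. All cross edges are red except a sparse blue bipartite graph with degrees in $[\delta,\Delta]$, each such edge touching the small set $A_1\cup A_2$. Taking $\delta>n/2-t_2$ gives the bipartite red graph maximum degree below $t_2$, so the root cannot be red. The blocking in blue is a count against the tree's limited branching. A blue copy rooted in $C_1$ can use at most $(m+k+1)\Delta$ vertices of $C_2$, or at most $(k+1)(a+\Delta)$, since neighbours in $B_1$ only reach $A_2$. One then needs $n/2$ plus this cap to be below $t_1+t_2$.

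Concretely, the paper uses two instances:
\begin{itemize}
\item For the first term: the $(t_2,t_1-1,1)$-deranged star, $|C_i|=t_2+\frac{t_1^2}{25t_2}$, $|A_i|=\frac{t_1}{5}$. The blue cross edges are a biregular graph between $A_1$ and $B_2$ (and between $A_2$ and $B_1$), with degrees $\frac{t_1}{5}$ and about $\frac{t_1^2}{25t_2}$. Then $|C_1|+2(a+\Delta)=t_2+\frac{4t_1}{5}+\frac{t_1^2}{25t_2}<t_1+t_2$.
\item For the second term: the $(t_2,k,k)$-deranged star with $t_1=k^2+1$, and two blue cliques of size $t_2+\frac k2-2$ joined by a $(\frac k2-1)$-regular blue bipartite graph. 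Red degrees are then $t_2-1$, and the cap gives $t_2+k^2-k-3<t_1+t_2$.
\end{itemize}
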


This next proposition gives the lower bound in Theorem~\ref{maintheorem1}.

\begin{prop} \label{prop:bounded-degree-lower-bound}
    Let $t_2 \ge 2 t_1$ be sufficiently large, and suppose $40 t_2^{2/3} \log^{1/3} t_2 \le x \le t_1$. Then there is a $(t_1, t_2)$-tree $T$ with $\Delta(T) \le t_2 - t_1 + x$ and $\mathcal{R}(T) \ge 2t_2 + \frac{x^3}{320 t_2^2} - 4$.
\end{prop}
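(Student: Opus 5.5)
The plan is an explicit colouring of $K_n$ with $n = 2t_2 + y - 1$ and $y \approx x^3/(320t_2^2)$. It perturbs Burr's first colouring (two blue cliques joined by red edges) by a sparse random ``swap''. Set $p = c\,x/t_2$ for a suitable small constant $c$. Split $V(K_n)$ into halves $A,B$ with $|A|,|B| \approx t_2 + y/2$. Inside $A$ and inside $B$, colour each edge red independently with probability $p$. Across, colour each edge blue with probability $p$. All other edges take the colours of Burr's construction (blue inside, red across). Chernoff bounds together with $x \ge 40t_2^{2/3}\log^{1/3}t_2$ should give, with positive probability, the following concentration:
\begin{itemize}
\item every vertex has $(1\pm o(1))p\,t_2$ ``wrong'' neighbours on each side;
\item every pair (and every small set) of vertices has its common wrong neighbourhoods concentrated.
\end{itemize}
The lower bound on $x$ is exactly what makes the error terms, of order $\sqrt{p t_2 \log t_2}$, smaller than the gains we want.

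The tree $T$ should be forced to use many wrong edges whichever colour it lives in. I would take $T$ to be a ``broom'': a centre $v \in I_1$ with $\Delta(T) = t_2 - t_1 + x$ leaf neighbours, together with $t_1 - 1$ further $I_1$-vertices attached to $v$ through short paths. The remaining $I_2$-vertices are spread evenly as leaves on these, so each non-central $I_1$-vertex has about $(t_1 - x)/t_1 + 1$ neighbours.

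In blue, $T$ has $t_1 + t_2 > |A|$ vertices, so it must cross between $A$ and $B$. More precisely, one side must receive at least $t_1 + t_2 - |A| \approx t_1 - y$ vertices. Every blue edge across the cut is a wrong edge, and each vertex has only about $p\,t_2$ of them. Balancing sizes then forces roughly $x$ vertices of $T$ to be embedded via cross edges, which is where $p \approx x/t_2$ enters.

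In red, the colouring is bipartite plus noise. The $t_2$ vertices of $I_2$ fill all but about $y$ vertices of one side, say $B$, so $I_1$ lies essentially in $A$. Inside $A$ there are only $\approx |A| - t_1$ further vertices. The centre $v$ needs $\Delta$ red neighbours, which is fine. The issue is the $x$ ``extra'' degree: it must be absorbed by $I_2$-vertices placed in $A$, connected through red edges inside $A$, which have density $p$.

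The key step is to turn both scenarios into a quantitative shortage. The typical-degree count alone only gives a deficit linear in $x$. The cubic term $x^3/t_2^2$ should come from a second-order count. The roughly $x$ vertices forced onto the ``wrong'' side must all be adjacent, through wrong edges, to a bounded number of tree vertices, namely $v$ and its neighbours. That means they lie in the common wrong neighbourhood of a few vertices. The number of such candidates is about $p^2 t_2 = c^2x^2/t_2$ for pairs, and iterating leads to $p^2 x \sim x^3/t_2^2$. So $y$ slack is insufficient exactly when $y \lesssim x^3/(320t_2^2)$.

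I expect the main obstacle to be making this second-order counting argument rigorous against every possible embedding of $T$, not just the ``natural'' one. To handle this, I would first fix the side containing most of $I_2$ (respectively most of $T$). I would then apply a Hall-type or counting bound to the $I_1$-vertices lying on the wrong side, using the codegree concentration. If the broom shape turns out not to force the needed interaction, I would modify $T$ so that the $x$ excess vertices must attach to a few common high-degree vertices. The constants $40$, $320$ and $-4$ suggest a direct but careful computation along these lines.
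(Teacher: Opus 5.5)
There is a genuine gap. As specified, neither your tree nor your colouring works, and the ``second-order count'' is not tied to any constraint that an embedding actually has to satisfy. Take for instance $t_1\ge 2x$ and $c\le 1/2$; the hypotheses allow this, e.g.\ $t_1=t_2/2$ and $x=40t_2^{2/3}\log^{1/3}t_2$. In your colouring every vertex of $A$ has about $(1-p)|A|\approx t_2+y/2-cx\ge t_2-t_1+x$ blue neighbours in $A$ and about $cx$ blue neighbours in $B$. The same holds for red with $A$ and $B$ swapped.

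So the broom embeds in blue. Put the centre in $A$ with all its neighbours in $A$. Send the $t_1-1$ other $I_1$-vertices to $B$ along cross edges from distinct middle vertices; a matching of size $t_1-1$ in the nearly $cx$-regular blue cross graph provides these. Put the $t_1-x$ spare leaves inside $B$. Your bound ``each vertex has only about $pt_2$ wrong edges'' restricts nothing here, because the tree crosses the cut through $t_1-1$ different vertices, one edge each.

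The broom also embeds in red. The centre's $t_2-t_1+x$ neighbours fit among its roughly $t_2+y/2-cx$ red neighbours in $B$. The rest of the tree is paths of length two with at most one pendant leaf. These embed easily in the nearly complete red bipartite graph between $A$ and $B$, where every vertex misses only about $cx$ vertices on the other side. So the ``extra degree $x$'' never has to be absorbed inside $A$. Independently of the tree, the red noise inside $A$ and $B$ makes red non-bipartite. That destroys the only mechanism that can give a codegree-type gain in red: even the paper's tree would embed in red in your colouring.

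The missing ideas are exactly what the paper's proof uses.
\begin{enumerate}
\item \emph{The tree.} It must contain two $I_1$-vertices at distance two whose neighbourhoods together cover all of $I_2$. The paper takes $T(\ell,m,k,s)$ with $\ell=t_2-t_1+x-1$, $m=t_1-2$, $k=1$, $s=t_1-x$. This is your broom, but with all $t_1-x$ spare leaves on a single extra $I_1$-vertex $r_2$.
\item \emph{The colouring.} Both cliques $C_1,C_2$ are entirely blue, so red is exactly bipartite, and $|C_i|=t_2+\frac12ap^2-2$. The blue cross edges are confined to those touching small ``bridge'' sets $A_i\subseteq C_i$ with $|A_i|=a=x/5$. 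Each such edge is present independently with probability $p=x/(8t_2)$.
\end{enumerate}
In red, the images of $r_1,r_2$ lie in the same part and have at least $\lfloor\frac12ap^2\rfloor$ common blue neighbours in the other part. Hence their red neighbourhoods cover at most $t_2-1$ vertices, fewer than the $t_2$ needed.

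In blue, the deranged star inside $T$ has $t_2+x-2$ vertices. A blue copy rooted in $C_1$ can use at most $2(a+2(a+b)p)\approx 0.9x$ vertices of $C_2$. Only the root's cross neighbours and the $a$ vertices of $A_1$ can carry children across, while vertices of $B_1$ see only $A_2$. This is too few, since $C_1$ has just $t_2+\frac12ap^2-2$ vertices. The cubic gain is $\gamma\approx\frac12ap^2$ with $a\asymp x$ and $p\asymp x/t_2$. Your heuristic $p^2x$ has the right order, but there is no argument behind it.
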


In particular, for any $\varepsilon > 0$ there is a $\delta > 0$ such that, for sufficiently large $t_1$ and $t_2$ with $t_1 \ge (1 + 2\varepsilon) \varepsilon t_2$, there are $(t_1,t_2)$-trees $T$ with maximum degree $(1 + \varepsilon) t_2 - t_1$ for which Burr's conjecture is off by at least $\delta t_2$.

To prove these results, we shall construct a class of trees, and then colour the edges of large enough complete graphs so as to avoid monochromatic copies of our trees.

\subsection{The Trees}

The trees we use will be variations on the stars and double-stars that have previously been studied in the context of Burr's conjecture. To simplify their description, we will refer to $K_{1,\ell}$, the star with $\ell$ leaves, as an $\ell$-star.

\begin{dfn}[Deranged stars] \label{dfn:deranged-stars}
    Given $\ell \ge m \ge 0$ and $k \ge 0$, an \emph{$(\ell, m, k)$-deranged star} is formed by taking an $\ell$-star, and attaching $k$ leaves to each of $m$ neighbours of the root vertex.
\end{dfn}

Note that, in particular, if $m = 0$ or $k=0$, an $(\ell, m, k)$-deranged star is just an $\ell$-star, while setting $m=1$ instead yields the double star mentioned in the introduction. Although these deranged stars already suffice to prove many of our lower bounds, the root vertex has large degree --- it is adjacent to all $\ell$ vertices from the other part in the tree's bipartition. To prove lower bounds with control over the maximum degree, we introduce a wider class of trees.

\begin{dfn} \label{dfn:trees}
    Given $\ell \ge m \ge 0$, $k \ge 0$, and $s \ge 0$, the tree $T(\ell, m, k, s)$ is obtained by taking an $(\ell, m, k)$-deranged star with root $r_1$, and a disjoint $s$-star with root $r_2$, and adding a path of length two between $r_1$ and $r_2$ through a new vertex.
\end{dfn}

This abundance of parameters allows us to control the ratio between the sizes of the parts of the tree, as well as the maximum degree. Indeed, observe that $T(\ell, m, k, s)$ is an $(mk + 2, \ell + s + 1)$-tree whose maximum degree is $\max( \ell + 1, s + 1, k + 1, 2)$.

\subsection{The Colourings}

Now that we have constructed our trees, we turn to the colourings of the complete graph. Our colourings will be similar to Burr's conjectured extremal colourings, in that we shall have two disjoint blue cliques. However, to improve the bounds, we shall allow a few blue edges between the cliques, with restrictions on the degrees of these crossing edges.

\begin{dfn}[$(a,b;\delta, \Delta, \gamma)$-colourings] \label{dfn:colourings}
Given integers $a, b, \delta, \Delta, \gamma$, an \emph{$(a,b;\delta, \Delta, \gamma)$-colouring} is a red-/blue-colouring of the edges of $K_{2(a+b)}$ such that:
\begin{enumerate}
    \item the vertices $V(K_{2(a+b)})$ are partitioned into two \emph{parts} $C_1, C_2$, each of size $a + b$, and each $C_i$ is further partitioned into sets $A_i, B_i$ of sizes $a$ and $b$ respectively,
    \item each $C_i$ induces a blue clique,
    \item every blue edge between $C_1$ and $C_2$ has at least one endpoint in $A_1 \cup A_2$,
    \item every vertex has between $\delta$ and $\Delta$ blue neighbours in the opposite part, and
    \item every two vertices in a part have at least $\gamma$ common blue neighbours in the opposite part.
\end{enumerate}
\end{dfn}

In the next subsection, we shall show that these colourings avoid monochromatic copies of the trees previously defined. Before we proceed, though, we construct families of $(a,b;\delta, \Delta, \gamma)$-colourings for appropriate choices of the parameters.

\begin{lemma} \label{lem:existence-of-colourings}
    For every sufficiently large a, the following colourings exist.
    \begin{itemize}
        \item[(a)] For every $0 \le d \le a$, there is an $(a,0;d,d,0)$-colouring of $K_{2a}$.
        \item[(b)] Provided $b \ge \max( a, \Delta)$, there is an $(a, b; \left \lfloor \frac{a \Delta}{b} \right \rfloor, \Delta, 0)$-colouring of $K_{2(a+b)}$.
        \item[(c)] For every $0 < p < 1$, and $a,b$ such that $ap^2 > 17 \log (a+b)$, there is an $(a, b; \frac12 ap, 2(a+b)p, \left \lfloor \frac12 p^2 a \right \rfloor )$-colouring of $K_{2(a+b)}$.
    \end{itemize}
\end{lemma}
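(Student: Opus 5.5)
In all three parts, conditions 1 and 2 of Definition~\ref{dfn:colourings} are automatic: we fix the partition $C_i = A_i \cup B_i$ and colour every edge inside each $C_i$ blue. The only freedom is which edges of the bipartite graph between $C_1$ and $C_2$ are blue, and this blue bipartite graph $H$ must meet conditions 3--5. So in each part we will build a bipartite graph $H$ with prescribed degrees. For condition 3, $H$ must avoid $B_1\times B_2$.

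\textbf{Part (a).} Here $b=0$, so condition 3 is vacuous and $\gamma=0$ makes condition 5 trivial. We need a $d$-regular bipartite graph between two sets of size $a$. Identify $C_1$ and $C_2$ with $\mathbb{Z}_a$ and join $x\in C_1$ to $x+j\in C_2$ for $j=0,\dots,d-1$. This is a circulant, so every vertex has exactly $d$ blue neighbours across.

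\textbf{Part (b).} Again $\gamma=0$. We take $H$ to be the union of two bipartite graphs, one between $A_1$ and $B_2$ and one between $B_1$ and $A_2$, and no other crossing edges; this satisfies condition 3. Each of the two pieces is a biregular-as-possible bipartite graph in which the vertices of the $A$-side (size $a$) have degree exactly $\Delta$. This is possible since $\Delta\le b$. The $B$-side (size $b$) vertices then have degrees $\lfloor a\Delta/b\rfloor$ or $\lceil a\Delta/b\rceil$, which are at most $\Delta$ because $a\le b$. Concretely, identify $B_2$ with $\mathbb{Z}_b$ and join the $i$-th vertex of $A_1$ to $B_2$-vertices $i\Delta,\dots,i\Delta+\Delta-1 \pmod b$. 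These $a\Delta$ consecutive residues wrap evenly around $\mathbb{Z}_b$, so each vertex of $B_2$ gets $\lfloor a\Delta/b\rfloor$ or $\lceil a\Delta/b\rceil$ neighbours. Condition 4 follows, with lower bound $\lfloor a\Delta/b\rfloor$ and upper bound $\Delta$.

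\textbf{Part (c).} Use a random construction: each pair in $(A_1\times C_2)\cup(C_1\times A_2)$ is blue independently with probability $p$. Condition 3 holds by design, and the remaining checks are concentration bounds.
\begin{itemize}
\item \emph{Lower degree bound.} Every vertex of $C_1$ has at least $a$ potential neighbours (all of $A_2$), so its crossing degree dominates $\mathrm{Bin}(a,p)$. Chernoff gives $\Pr[\deg<\tfrac12 ap]\le e^{-ap/8}$.
\item \emph{Upper degree bound.} Every vertex has at most $a+b$ potential neighbours, so its degree is at most $\mathrm{Bin}(a+b,p)$. The probability of exceeding $2(a+b)p$ is at most $e^{-(a+b)p/3}$.
\item \emph{Codegree bound.} Two vertices in $C_1$ have, in particular, every vertex of $A_2$ as an independent potential common neighbour, each with probability $p^2$. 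Hence the codegree dominates $\mathrm{Bin}(a,p^2)$, and $\Pr[\text{codeg}<\tfrac12 ap^2]\le e^{-ap^2/8}$.
\end{itemize}

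We then take a union bound over the $2(a+b)$ vertices and fewer than $2(a+b)^2$ pairs. The hypothesis $ap^2>17\log(a+b)$ makes each failure probability at most $(a+b)^{-17/8}$ or smaller; since $p\le 1$, the degree bounds are even better. So the total failure probability is at most roughly
\[
2(a+b)^{2}(a+b)^{-17/8}+4(a+b)(a+b)^{-17/8}<1
\]
for large $a$. Since degree and codegree are integers, requiring codegree at least $\tfrac12 p^2 a$ gives the floor in the statement. The only delicate point is this constant bookkeeping: the pair count is exactly where the constant $17>16$ is needed, since $(a+b)^{2-17/8}\to 0$.
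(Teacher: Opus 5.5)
Your proof is correct and follows the same approach as the paper: a $d$-regular bipartite graph between the cliques for (a), two copies of a near-biregular bipartite graph placed between $A_1,B_2$ and between $B_1,A_2$ for (b), and the same random colouring of edges meeting $A_1\cup A_2$, with Chernoff bounds and a union bound, for (c). Your explicit circulant constructions and your check that $\lceil a\Delta/b\rceil\le\Delta$ (using $a\le b$) simply fill in details the paper leaves implicit.
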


\begin{proof}
    \begin{itemize}
        \item[(a)] We take $B_1$ and $B_2$ to be empty, so that $A_1 = C_1$ and $A_2 = C_2$ are disjoint blue cliques of order $a$. We embed any $d$-regular bipartite subgraph as the blue edges between these cliques, with all other edges between the cliques coloured red.
        \item[(b)] Let $\Gamma$ be a bipartite graph with parts $X$ of size $a$ and $Y$ of size $b$, such that every vertex in $X$ has degree $\Delta$, while every vertex in $Y$ has degree $\left \lfloor \frac{a\Delta}{b} \right\rfloor$ or $\left \lceil \frac{a \Delta}{b} \right \rceil$. We then embed two blue copies of $\Gamma$ between the blue cliques $C_1$ and $C_2$: one with $A_1 = X$ and $B_2 = Y$, and the other with $A_2 = X$ and $B_1 = Y$. All other edges between $C_1$ and $C_2$ are coloured red.
        \item[(c)] We colour every edge between the blue cliques $C_1$ and $C_2$ that has an endpoint in $A_1 \cup A_2$ blue with probability $p$, independently of all other edges.

        For $i = 1, 2$, the blue-degree in $C_{3-i}$ of a vertex in $A_i$ has a binomial distribution $\mathrm{Bin}(a+b,p)$, while the blue-degree in $C_{3-i}$ of a vertex in $B_i$ follows the distribution $\mathrm{Bin}(a,p)$. Furthermore, for every pair of vertices in $C_i$, the number of common blue neighbours in $A_{3-i}$ is distributed as $\mathrm{Bin}(a,p^2)$.

        By McDiarmid's Chernoff-type concentration inequalities~\cite[Theorem 2.3]{mcdiarmid1998concentration}, it follows that if $S_n \sim \mathrm{Bin}(n,q)$, then $\mathbb{P}(S_n \le \frac12 nq)$ and $\mathbb{P}(S_n \ge 2nq)$ can both be bounded by $e^{-\frac18 nq}$. Using this, together with a union bound over the $2(a+b)$ vertices in the graph and $2 \binom{a+b}{2}$ pairs of vertices within the cliques, we deduce that with positive probability, each vertex has between $\frac12 ap$ and $2(a+b)p$ blue neighbours in the other clique, while every pair of vertices has at least $\frac12 ap^2$ common blue neighbours in the opposite part. \qedhere
    \end{itemize}
\end{proof}

\subsection{No Monochromatic Trees}

We now show that, for appropriate choices of the parameters, the $(a,b;\delta, \Delta, \gamma)$-colourings do not contain monochromatic copies of the deranged stars and the $T(\ell,m,k,s)$ trees. We start by considering the red subgraph.

\begin{lemma} \label{lem:no-red-copy}
    Suppose, for some parameters $a, b, \delta, \Delta, \gamma \ge 0$, we have an $(a,b;\delta, \Delta, \gamma)$-colouring of $K_{2(a+b)}$. If $\ell > a + b - \delta$, then there is no red $(\ell, m, k)$-deranged star. If $\ell + s \ge a + b - \gamma$, then there is no red copy of $T(\ell, m, k, s)$.
\end{lemma}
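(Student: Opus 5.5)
The plan is to use that the red graph is bipartite between $C_1$ and $C_2$, since each $C_i$ is a blue clique. So every red edge goes between the parts, and every vertex $v \in C_i$ has red degree exactly $(a+b) - d_B(v)$, where $d_B(v)$ is its blue degree into $C_{3-i}$. The fourth condition of Definition~\ref{dfn:colourings} gives $d_B(v) \ge \delta$, so every red degree is at most $a+b-\delta$.

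\emph{Deranged stars.} The root of an $(\ell,m,k)$-deranged star has degree $\ell$, since its neighbours are the $\ell$ leaves of the $\ell$-star, some of which carry $k$ further leaves. A red copy would therefore need a vertex of red degree at least $\ell > a+b-\delta$. This contradicts the bound above, so no red copy exists. Only the degree condition is used here; $m$ and $k$ play no role.

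\emph{The trees $T(\ell,m,k,s)$.} In $T(\ell,m,k,s)$ the roots $r_1$ and $r_2$ are joined by a path $r_1 w r_2$. Since the red graph is bipartite with parts $C_1, C_2$, a red embedding sends $r_1$ and $r_2$ into the same part, say $C_i$. The $\ell$ neighbours of $r_1$ and the $s$ leaves of $r_2$ all lie in the same class of the tree's bipartition as $w$, so together with $w$ they form $\ell+s+1$ distinct vertices. All of them must map into $C_{3-i}$. Each of these $\ell+s+1$ images is a red neighbour of $r_1$ or of $r_2$: the $\ell$ neighbours of $r_1$ and $w$ are adjacent to $r_1$, and the $s$ leaves are adjacent to $r_2$. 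Hence they all lie in $N_R(r_1)\cup N_R(r_2) \cap C_{3-i}$, and we get $\ell+s+1 \le |N_R(r_1)\cup N_R(r_2)|$.

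Now bound that union by complementation: it equals $(a+b) - |N_B(r_1)\cap N_B(r_2)\cap C_{3-i}|$. The fifth condition of Definition~\ref{dfn:colourings} says the common blue neighbourhood has size at least $\gamma$, so the union has size at most $a+b-\gamma$. This gives $\ell+s+1 \le a+b-\gamma$, which contradicts $\ell+s \ge a+b-\gamma$.

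The only delicate point is counting $w$ along with the $\ell$ and $s$ leaves, which is exactly what produces the $+1$ and makes the non-strict inequality in the hypothesis sufficient. We must also check that all these vertices are distinct and lie in $C_{3-i}$, which follows from the tree structure and bipartiteness. The blue edges between the parts themselves play no role, since red never appears inside a part.
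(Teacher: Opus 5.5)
Your proof is correct and follows essentially the same route as the paper. The red graph is bipartite, and the lower bound $\delta$ on blue degrees caps red degrees at $a+b-\delta$, which rules out the deranged star; for $T(\ell,m,k,s)$, both root images lie in one part, and the $\ell+s+1$ vertices adjacent to $r_1$ or $r_2$ must fit into the union of their red neighbourhoods, which has size at most $a+b-\gamma$. Your explicit counting of $w$ and the complementation identity only spell out details the paper leaves implicit.
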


\begin{proof}
    First note that in an $(a,b;\delta, \Delta, \gamma)$-colouring, the red edges form a bipartite subgraph with parts $C_1$ and $C_2$. Furthermore, since every vertex is incident to at least $\delta$ blue edges between the parts, it follows that the maximum degree of the red subgraph is at most $a + b - \delta$. Hence, if $\ell > a + b - \delta$, then there is no red $(\ell, m, k)$-deranged star, as the root of such a star has degree $\ell$.

    Next consider the case when $\ell + s \ge a + b - \gamma$, and suppose for contradiction we had a red copy of $T(\ell, m, k, s)$. As the red subgraph is bipartite, the images of the roots $r_1$ and $r_2$, say $v_1$ and $v_2$, must lie in the same part of the colouring, say $C_1$. As $v_1$ and $v_2$ have at least $\gamma$ common blue neighbours in $C_2$, it follows that the union of their red neighbourhoods has size at most $a + b - \gamma$. However, in $T(\ell, m, k, s)$, there are $\ell + s + 1$ vertices adjacent to $r_1$ or $r_2$, and thus it follows that $v_1$ and $v_2$ do not have enough red neighbours in which to embed a copy of $T(\ell, m, k, s)$.
\end{proof}

We now turn to the blue subgraph of an $(a,b;\delta, \Delta, \gamma)$-colouring, showing that, provided the parameters satisfy the appropriate conditions, this will also be free of $(\ell,m,k)$-deranged stars.

\begin{lemma} \label{lem:no-blue-copy}
    Provided $\ell + mk \ge a + b + ( m + k + 1) \Delta$ or $\ell + mk \ge a + b + (k+1)(a + \Delta)$, an $(a,b;\delta, \Delta, \gamma)$-colouring does not contain a blue $(\ell,m,k)$-deranged star.
\end{lemma}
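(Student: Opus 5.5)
Suppose for contradiction that there is a blue copy of the $(\ell,m,k)$-deranged star, with root mapped to a vertex $v$, say $v \in C_1$. The plan is to count how many tree vertices can lie in $C_1$ and in $C_2$, and show the total is less than $1+\ell+mk$. Since $|C_1| = a+b$, at most $a+b$ vertices of the copy lie in $C_1$. So it suffices to show that at most $\ell+mk - (a+b)$ vertices lie in $C_2$, i.e.\ strictly fewer than $1 + \ell + mk - (a+b)$. Each hypothesis gives a different way of bounding the vertices in $C_2$.

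\emph{First condition.} Classify the tree vertices placed in $C_2$ by their parent in the tree: the root's children, the children of the $m$ middle vertices, and so on. A vertex of the copy lying in $C_2$ either has its tree-parent in $C_1$, or it is the root's image, which lies in $C_1$. Each edge from a $C_1$-vertex to a $C_2$-vertex is a blue crossing edge, so every such vertex in $C_2$ is a blue neighbour in $C_2$ of some tree vertex in $C_1$ that has children, or it is reached from a $C_2$ parent. To bound this, consider the components of the copy restricted to $C_2$. Each one is entered through a single crossing edge from a $C_1$ parent. The copy has only $m+1$ internal vertices (the root and the $m$ middle vertices), so there are at most $m+1$ such entry points per $C_1$-vertex. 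I would rather count directly: vertices in $C_2$ are either
\begin{itemize}
\item[(i)] children of the root lying in $C_2$, of which there are at most $\Delta$;
\item[(ii)] leaves attached to middle vertices, where the middle vertex is in $C_1$ and the leaf is in $C_2$, giving at most $\Delta$ per such middle vertex and $m\Delta$ in total; or
\item[(iii)] leaves attached to middle vertices that themselves lie in $C_2$. There are at most $\Delta$ such middle vertices, each with $k$ leaves.
\end{itemize}
Case (iii) gives $k$ times at most $\Delta$ middle vertices, but that leaves $k$ times the number of $C_2$ middle vertices, of which there are at most $\Delta$, for a total of $k\Delta$. The total in $C_2$ is therefore at most $\Delta + m\Delta + k\Delta = (m+k+1)\Delta$. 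Hence $1+\ell+mk \le a+b+(m+k+1)\Delta$, which contradicts $\ell + mk \ge a+b+(m+k+1)\Delta$. The hypothesis is stated with $\ge$, so I need the count to be strict. I would get strictness by noting that the root occupies a vertex of $C_1$, so the middle vertices in $C_2$ number at most $\Delta$ only in an overlapping way. Failing that, I would observe that $1+\ell+mk > \ell+mk$.

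\emph{Second condition.} Here I would bound the middle vertices in $C_1$ more crudely but the leaves better. A vertex of $B_1$ has blue crossing neighbours only in $A_2$, by property 3, and vertices of $A_1$ number $a$. The $C_2$-vertices of the copy are then:
\begin{itemize}
\item[(i)] root children, at most $\Delta$;
\item[(ii)] for each middle vertex in $C_2$ (at most $\Delta$ of them), its $k$ leaves, giving at most $k\Delta$;
\item[(iii)] leaves in $C_2$ of middle vertices in $C_1$. Each such leaf is a blue crossing neighbour. If the leaf lies in $B_2$, its parent must be in $A_1$. If the leaf lies in $A_2$, there are at most $a$ such leaves altogether. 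So these number at most $a$ plus $k$ times the number of middle vertices in $A_1$.
\end{itemize}
Bounding the middle vertices in $A_1$ is the main obstacle, since a naive bound is $a$, giving $ka$. Combining, the count is at most $\Delta + k\Delta + a + ka = (k+1)(a+\Delta)$, exactly matching the hypothesis. So the same contradiction follows, with strictness obtained from the extra $+1$ coming from the root as before.
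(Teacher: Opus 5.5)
Your proposal is correct and is essentially the paper's proof: with the root in $C_1$, you allow all $a+b$ vertices of $C_1$ and bound the vertices in $C_2$ by $(m+k+1)\Delta$ or by $(k+1)(a+\Delta)$, as the paper does. The only variation is in the second count, where you split crossing leaves by whether they land in $A_2$ or $B_2$ rather than by whether their parent lies in $B_1$ or $A_1$; this is an equivalent use of property~3. Strictness is automatic, since the tree has $\ell+mk+1$ vertices while either hypothesis caps the available room at $\ell+mk$, so you should delete the ``overlapping'' remark and the opening musing about entry points.
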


\begin{proof}
    Suppose for contradiction that there was a blue copy of an $(\ell, m, k)$-deranged star in an $(a,b;\delta, \Delta, \gamma)$-colouring. Let $v$ be the image of the root $r$ of the star, and suppose without loss of generality that $v \in C_1$. We shall, in two different ways, bound the number of vertices of the deranged star that can come from $C_2$, and conclude that there is not enough space to embed the deranged star within the blue subgraph.

    First, observe that in the deranged star, $r$ has $m$ neighbours that each have $k$ children of their own, with the remaining $\ell - m$ neighbours of $r$ being leaves. The vertex $v$ has at most $\Delta$ blue neighbours in $C_2$, and for each of these neighbours we can embed $k$ children in $C_2$. The remaining neighbours of $r$ must be embedded within $C_1$. Each such vertex has at most $\Delta$ blue neighbours in $C_2$, and so for the at most $m$ remaining non-leaf neighbours of $r$, we can embed at most $\Delta$ of their children in $C_2$. In total, this blue copy of the $(\ell, m, k)$-deranged star can have at most $(m + k + 1)\Delta$ vertices from $C_2$, together with the $a + b$ vertices of $C_1$. Thus, if the first inequality holds, then we do not have enough space to embed the $\ell + mk + 1$ vertices of the $(\ell, m, k)$-deranged star.

    We can also bound the vertices embedded in $C_2$ through a different argument. As before, $v$ has at most $\Delta$ blue neighbours in $C_2$, and for each of these we can embed a further $k$ children in $C_2$. We can also embed neighbours of $r$ in $A_1$, each of which could have its $k$ children embedded in $C_2$, allowing us to utilise a further $a k$ vertices of $C_2$ this way. Finally, the remaining neighbours of $r$ must be embedded in $B_1$. However, all of the blue edges between $B_1$ and $C_2$ have their endpoints in $A_2$, and thus we can reach at most another $a$ vertices of $C_2$ through these neighbours. In total, this gives us at most $(k+1)(a + \Delta)$ vertices of $C_2$, together with the $a + b$ vertices of $C_1$. Thus, if the second inequality holds, then we again do not have enough space to embed the $\ell + mk + 1$ vertices of the $(\ell, m, k)$-deranged star.
\end{proof}

\subsection{Deducing Our Results}

We can now combine the above results to obtain the lower bounds on the Ramsey numbers of unbalanced trees. We begin by proving Proposition~\ref{prop:main-lower-bound}, showing that the Ramsey number can be considerably larger than the bound conjectured by Burr.

\begin{proof}[Proof of Proposition~\ref{prop:main-lower-bound}]
    Note that if the first term in the maximum dominates, we have $t_1 = \Omega(t_2^{2/3})$. To establish this bound, we let $T$ be an $(\ell, m, k)$-deranged star with $\ell = t_2$, $m = t_1 - 1$, and $k=1$. For the colouring, we set $a = \frac{t_1}{5}$, $b = t_2 - \frac{t_1}{5} + \frac{t_1^2}{25 t_2}$, and $\Delta = \frac{t_1}{5}$, and then take an $(a,b; \left \lfloor \frac{a\Delta}{b} \right\rfloor, \Delta, 0)$-colouring, which  exists by part (b) of Lemma~\ref{lem:existence-of-colourings}. We then have $a + b - \left \lfloor \frac{a\Delta}{b} \right\rfloor = t_2 + \frac{t_1^2}{25t_2} - \left \lfloor \frac{t_1^2}{25 \left(t_2 - \frac{t_1}{5} + \frac{t_1^2}{25t_2} \right)} \right\rfloor < t_2$, and so $\ell > a + b - \delta$, which by Lemma~\ref{lem:no-red-copy} implies we have no red copy of $T$. On the other hand, $a + b + 2(a + \Delta) = t_2 + \left( \frac45 + \frac{t_1}{25 t_2} \right) t_1 < t_2 + t_1$. We thus have $\ell + mk \ge a + b + (k+1)(a + \Delta)$, and so by Lemma~\ref{lem:no-blue-copy}, it follows that there is no blue copy of $T$ either. Hence, as we have no monochromatic copy of $T$, it follows that $\mathcal{R}(T) \ge 2(a+b) = 2t_2 + \frac{2 t_1^2}{25 t_2} $.

    For the second bound, let $T$ be an $(\ell, k, k)$-deranged star, where $k$ is even and $\ell \ge 2k^2 + 2$. We then have $t_2 = \ell$, and $t_1 = k^2 + 1$. We take an $(a,b; \delta, \Delta, \gamma)$-colouring with $a = \ell + \frac12 k - 2$, $b = 0$, $\delta = \Delta = \frac12 k - 1$, and $\gamma = 0$, which exists by part (a) of Lemma~\ref{lem:existence-of-colourings}. We have $a + b - \delta = \ell + \frac12 k - 2 - ( \frac12 k - 1) = \ell - 1 < \ell$, and so by Lemma~\ref{lem:no-red-copy}, there is no red copy of $T$. On the other hand, $a + b + (m + k + 1)\Delta = \ell + \frac12 k - 2 + (2k + 1) (\frac12 k - 1) = \ell + k^2 - k - 3 < \ell + k^2 = \ell + mk$, and so by Lemma~\ref{lem:no-blue-copy}, there is no blue copy of $T$ either. Thus, this colouring has no monochromatic copy of $T$, showing $\mathcal{R}(T) \ge 2(a + b) = 2 \ell + k - 4 > 2 t_2 + \sqrt{t_1} - 5$.
\end{proof}

We conclude this section by proving Proposition~\ref{prop:bounded-degree-lower-bound}, showing Burr's conjecture can fail for trees with somewhat smaller maximum degrees.

\begin{proof}[Proof of Proposition~\ref{prop:bounded-degree-lower-bound}]
    We take $T = T(\ell, m, k, s)$, where $\ell = t_2 - t_1 + x - 1$, $m = t_1 - 2$, $k = 1$, and $s = t_1 - x$. With these parameters, $T$ is a $(t_1,t_2)$-tree with $\Delta(T) = t_2 - t_1 + x - 1$.

    Next, set $p = \frac{x}{8 t_2}$, and take $a = \frac{x}{5}$ and $b = t_2 - a + \frac12ap^2 - 2$. Since $ap^2 = \frac{x^3}{320t_2^2} \ge 200 \log t_2 > 17\log (a + b)$, we may apply Lemma~\ref{lem:existence-of-colourings}(c) to obtain an $(a,b; \frac12ap, 2(a+b)p, \left \lfloor \frac12ap^2 \right \rfloor)$-colouring of $K_{2(a+b)}$.

    We have $a + b - \gamma = t_2 + \frac12 a p^2 - 2 - \left \lfloor \frac12 ap^2 \right \rfloor \le t_2 - 1 = \ell + s$, and so it follows from Lemma~\ref{lem:no-red-copy} that there is no red copy of $T$.

    Meanwhile, we have $a + b + (k+1)(a + \Delta) \le t_2 + \frac12 ap^2 + 2 \left( a + 2(t_2 + \frac12 ap^2)p \right) = t_2 + \frac{x^3}{640 t_2^2} + \frac{2x}{5} + \frac{x}{2} + \frac{x^4}{1280t_2^3} < \ell + mk = t_2 + x - 3$, and so Lemma~\ref{lem:no-blue-copy} shows that we do not have a blue copy of $T$ either.

    Thus, $\mathcal{R}(T) > 2(a + b) = 2 t_2 + ap^2 - 4 = 2t_2 + \frac{x^3}{320 t_2^2} - 4$. 
\end{proof}

\section{The Upper Bounds}
\label{secub} 
In this section, we prove our upper bounds on the Ramsey Number of $(t_1,t_2)$-trees satisfying
$t_2 \ge 500t_1$. In particular, the following result provides the upper  bound in Theorem~\ref{maintheorem}.\footnote{We easily have $f(t_1, t_2) = O(t_2)$, as we can greedily embed $T$ in a monochromatic subgraph of sufficiently large minimum degree. This settles the upper bound when $t_1$ and $t_2$ are comparable, and we are thus free to assume $t_2 \ge 500 t_1$.}

\begin{prop}\label{mainlem}
If   $t_2 \ge 500t_1$ and $n=2t_2 + \max\left(\frac{5t_1^2}{2t_2}, 20\sqrt{t_1}\right)$  then, for any colouring $G_r\cup G_b$ of $K_n$, every $(t_1,t_2)$-tree $T$ is a subgraph of $G_r$ or $G_b$. 
\end{prop}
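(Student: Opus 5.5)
We argue by contradiction: fix a colouring $G_r\cup G_b$ of $K_n$ and a $(t_1,t_2)$-tree $T$ that embeds in neither colour, and set $\epsilon := n-2t_2 = \max\left(\frac{5t_1^2}{2t_2},20\sqrt{t_1}\right)$. The plan is to show first that the colouring must look almost exactly like Burr's first extremal colouring, and then use the extra $\epsilon$ vertices to embed $T$ anyway.

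The first step is a greedy embedding lemma. Suppose a colour class contains a set $S$ in which every vertex has at least $t_1+t_2$ neighbours in $S$. Then $T$ embeds greedily in that colour, since at each step the vertex being extended has more neighbours in $S$ than there are vertices already used. We would rather build $T$ from a small class $I_1$ with many leaves on the $I_2$ side. So we first peel $T$: let $T'$ be the subtree obtained by removing the leaves in $I_2$. Then $T'$ has $t_1$ vertices in $I_1$ and at most $t_1-1$ non-leaves in $I_2$, hence at most $2t_1$ vertices. Embedding $T$ then amounts to embedding $T'$ and finding a system of distinct representatives for the leaves. By Hall's theorem, it suffices that the embedded $I_1$-vertices have, for each subset $U$, at least $\sum_{u\in U}(\text{leaves at } u)$ neighbours outside the image of $T'$.

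The second step is the structural reduction. Take $v$ of maximum blue degree.
\begin{itemize}
\item If some vertex has degree at least about $t_2+t_1$ in one colour in a robust sense, the greedy-plus-Hall method above embeds $T$.
\item Otherwise every vertex has both red and blue degree roughly $n/2 \approx t_2+\epsilon/2$.
\end{itemize}
In the second case we apply a standard argument. Either the blue graph has a component of size at least about $t_2+2t_1$ that is dense enough to embed $T$ via $T'$ and Hall, or the blue graph splits into two near-cliques $C_1,C_2$ of sizes near $t_2+\epsilon/2$. In the latter case the red graph is nearly complete bipartite between $C_1$ and $C_2$, and red $T$ embeds with $I_2$ on one side unless each vertex misses many red edges across. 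This counting shows that every vertex has at least roughly $\epsilon/2$ blue neighbours across the cut (otherwise its red degree exceeds $t_2$ plus slack). That yields a Burr-like colouring of the kind in Definition~\ref{dfn:colourings}, with $\delta \gtrsim \epsilon/2$.

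The final step, which I expect to be the main obstacle, is embedding $T$ in blue using these crossing edges. This must match the two regimes of Lemma~\ref{lem:no-blue-copy}. We place the vertex of $I_1$ with the most leaf-neighbours, together with most of $T'$, inside $C_1$. We then use crossing blue edges to route subtrees hanging off $I_1$-vertices into $C_2$, gaining the $t_1$ missing vertices. There are two sources of crossing capacity:
\begin{itemize}
\item If $T$ has $\Omega(\sqrt{t_1})$ branch vertices, each contributes about $\epsilon/2$ crossing neighbours. With $\epsilon \ge 20\sqrt{t_1}$, their total exceeds $t_1$, matching the $(m+k+1)\Delta$ bound.
\item If $T$ has few large pieces, a double-counting of crossing edges gives total crossing capacity about $\epsilon t_2/t_1 \ge \tfrac52 t_1$, matching the $a,\Delta$ regime and the $t_1^2/t_2$ term.
\end{itemize}
The delicate part is handling both cases simultaneously with Hall's condition and avoiding collisions. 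I would try a weighted greedy assignment: process the non-leaf $I_1$-vertices in decreasing order of subtree size and send each to $C_2$ via a fresh crossing neighbour, using $t_2\ge 500t_1$ to absorb all error terms.
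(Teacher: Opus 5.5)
Your outline (force a near-Burr structure, then spend the surplus $\epsilon=n-2t_2$) has the right shape, but there is a genuine gap in how you obtain and use the crossing blue edges.

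\textbf{Trees without a center.} You infer that every vertex has about $\epsilon/2$ blue neighbours across, ``otherwise its red degree exceeds $t_2$'' and red $T$ embeds. That inference is valid only when $T$ has a vertex carrying a large share of the leaves: the paper's \emph{center}, with more than $t_2/4$ leaf neighbours, which you place on the high-red-degree vertex before finishing with Hall. If the leaves are spread out, say about $t_2/t_1$ per vertex of $I_1$, one vertex of red degree above $t_2$ buys nothing. With balanced sides, the other images of $I_1$ are only guaranteed about $t_2+\epsilon/2-t_1<t_2$ red neighbours across (once $t_1>100$), and Hall's condition can fail on subsets avoiding that vertex. Your blue plan has the same weakness. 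Routing only about $t_1$ vertices into $C_2$ suffices only if the leaves left in $C_1$ can be matched. That is automatic when most of them hang off a center placed at a vertex of blue degree at least $t_2+\epsilon$, but not when they hang off many images of blue degree about $n/2-2t_1$.

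Your degree estimates (``roughly $n/2$'') also carry errors of order $t_1$, which swamp $\epsilon$. The $\epsilon/2$ bound must come from an exact statement. In the paper this is $\Delta(G_r)<t_2$, proved in Lemma~\ref{commonlem} only when $T$ has a center; it gives blue degree at least $n-t_2=t_2+\epsilon$, hence at least $\epsilon/2$ crossing blue edges at every vertex of the smaller side $C$. That lemma also supplies the bound $\Delta(G_b[C,D])\le t_1$, which you never state, and it is the bulk of the work rather than a standard argument.

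The idea you are missing for centerless $T$ is to embed in \emph{red}, with $I_1\subseteq C$, $I_2\subseteq D$ and $|D|\ge t_2+\epsilon/2$:
\begin{itemize}
\item Sort $I_1$ by number of leaf children, pair consecutive vertices, and choose the image of the second vertex of each pair to send few blue edges into $D\setminus N_r(\text{first image})$.
\item That set has fewer than $t_1$ vertices, each with fewer than $t_1$ blue edges to $C$. By averaging, a positive fraction of $C$ sends fewer than about $1.04\,t_1^2/t_2<\epsilon/2$ blue edges into it, so each pair's red neighbourhoods cover at least $t_2$ vertices of $D$.
\item A Hall-violating set then contains at most one vertex of each pair. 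As there is no center, it carries at most about $2t_2/3$ leaves, fewer than any single image's red degree.
\end{itemize}
This is where the term $\frac{5t_1^2}{2t_2}$ is used for centerless trees.

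\textbf{Trees with a center.} Here your final step is only a plan, and its counting is off. The claim that ``$\Omega(\sqrt{t_1})$ branch vertices each contribute $\epsilon/2$ crossing neighbours'' ignores that crossing blue neighbourhoods may overlap almost completely, as on the sets $A_i$ in the constructions of Section~\ref{seclb}. What controls overlap is the upper bound on crossing blue degree. The used vertices of $D$ each have fewer than $t_1$ blue edges to $C$, so in total they absorb fewer than $t_1^2$ crossing blue edges. On average, then, a vertex of $C$ sends fewer than about $1.005\,t_1^2/t_2<\frac{5t_1^2}{4t_2}\le\epsilon/2$ of its crossing edges to used vertices. Hence at least $|C|/100$ vertices of $C$, and so some unused vertex of $N_b(v)\cap C$, still have at least $\sqrt{t_1}/10$ crossing blue edges to unused vertices.

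The paper combines this with a size ordering. With $c$ at $v\in C$, it first disposes of the case where some vertex has as many crossing blue edges as $T-c$ has nonsingleton components. It then places the roots of these components, in decreasing order of size, on $N_b(v)\cap D$; there are at least $10\sqrt{t_1}$ such slots. If those components do not already contain $t_1$ non-neighbours of $c$, every remaining component has at most $\sqrt{t_1}/10+1$ vertices. Each can then be routed into $D$ through such a vertex of $N_b(v)\cap C$, until $t_1$ non-neighbours of $c$ lie in $D\setminus N_b(v)$. The rest goes greedily into $G_b[C]$, and the leaves of $c$, embedded last into $N_b(v)$, complete the copy.

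So the two terms of $\epsilon$ are not two regimes for two kinds of trees; both are used in one process. Your proposed greedy over $I_1$-vertices would need exactly this double count to avoid collisions.
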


 The following result provides the upper  bound in Theorem~\ref{maintheorem1}.
 
\begin{prop}\label{mainlem3}
  If $t_2 \ge 500 t_1$, and $n=2t_2-1$ then  every $(t_1, t_2)$-tree $T$ satisfying $\Delta(T) \le t_2 - t_1$ is a subgraph of $G_r$ or $G_b$.
\end{prop}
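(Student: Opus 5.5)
We prove Proposition~\ref{mainlem3} by a stability-type case analysis of the colouring $G_r\cup G_b$ of $K_n$, $n=2t_2-1$. Set $\Delta=\Delta(T)\le t_2-t_1$ and $t=t_1+t_2$. The first step is a reduction: $T$ has only $t_1$ vertices in $I_1$. Let $T'$ be the subtree spanned by $I_1$ together with a small set of vertices of $I_2$ that connects $I_1$; this uses at most $t_1-1$ vertices of $I_2$. The rest of $I_2$ consists of leaves hanging off $I_1$. Embedding $T$ therefore amounts to embedding $T'$ in one colour so that the images of $I_1$ admit a system of disjoint leaf sets. By Hall's theorem, such a system exists when every set $S\subseteq I_1$ has joint neighbourhood, outside the image of $T'$, of size at least the total number of leaves demanded by $S$. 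Since $t_2\ge 500t_1$, the tree $T'$ is tiny, and essentially everything reduces to finding a monochromatic host with large degrees for the $t_1$ centres.

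The second step is a case split on the structure of the colouring. Without loss of generality, blue has at least half of the edges.

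\textbf{Case 1.} There is a blue component $H$ in which at least about $t_1$ vertices have blue degree roughly $t_2$ or more, and these vertices are well connected. We then embed $T'$ greedily in $H$: there is spare room of $t_1$ vertices beyond the $t_2$ leaves needed. We verify the Hall condition using the minimum degree of the chosen centre images, which must exceed $\Delta+t_1\le t_2$; this is exactly where $\Delta\le t_2-t_1$ enters.

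\textbf{Case 2.} Otherwise, many vertices have blue degree below about $t_2-1$, hence red degree at least $n-1-(t_2-1)\approx t_2-1$. The aim here is to show that the colouring is close to Burr's extremal colouring: two near-cliques $C_1,C_2$ of size about $t_2$, with the crossing edges essentially red. We obtain this by a counting or stability argument on degrees, using that $n=2t_2-1$ is odd, so the red bipartite-ish graph cannot be perfectly balanced, in the spirit of Harary's star argument. In this near-extremal structure we embed in red. We put $I_1$ on the side of size at least $t_2$, which exists by parity/pigeonhole. A centre $v$ with red degree at least $t_2-1$ less its blue defect can still take up to $\Delta\le t_2-t_1$ leaves. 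The $t_1$ slack between $\Delta$ and $t_2$ absorbs the exceptional vertices and the vertices used by $T'$. If the red bipartite graph instead has a small side, we revert to blue inside the larger clique, which has at least $t_2$ vertices, plus blue edges to reach size $t$.

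The main obstacle is Case 2 in the exactly tight regime: turning ``no good blue host'' into a precise enough red structure that \emph{every} vertex used as a centre has red degree at least $\Delta+(\text{number of used vertices})$, with no additive loss, since we need $n=2t_2-1$ exactly. I would first try a maximal-counterexample or cleaning argument. Remove vertices with atypical degrees into the bulk, place them as leaves only, and use the factor $500$ so that the removed set (of size $O(t_1)$) plus $T'$ fits inside the $t_1$ slack guaranteed by $\Delta\le t_2-t_1$.
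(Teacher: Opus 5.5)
Your outline has the right overall shape: embed the small tree $T-L_2$, finish with Hall, and show that a colouring with no monochromatic $T$ is close to Burr's two-blue-cliques colouring. But the step that gives the exact value $n=2t_2-1$ is missing, and the mechanism you propose for it cannot work.

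\textbf{Hall's condition is about unions, not individual degrees.} It concerns neighbourhoods of \emph{sets} of parents. A bound such as $\deg>\Delta(T)+t_1$ for each centre image does not verify it: two parents with about $t_2/2$ leaves each, whose images share the same neighbourhood of size just above $\Delta(T)+t_1$, already violate Hall. Pure degree thresholds only suffice around $t_2+t_1$, far from the tight regime.

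\textbf{The tight regime is a covering problem.} Suppose one has the partition $V=C\cup D$ into blue near-cliques with $\Delta_b:=\Delta(G_b[C,D])\le t_1$ and $|C|\le|D|$. A red embedding with $I_1\subseteq C$ goes through greedily unless $t_2\le|D|<t_2+\Delta_b$. In that case $I_2$ must fill $D$ with fewer than $\Delta_b$ vertices to spare. Hall then becomes a covering condition on the whole set of images of $I_1$; the paper's criterion is that every vertex of $D$ be red-adjacent to images of parents of at least $\Delta_b$ leaves in total.

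\textbf{Cleaning cannot deliver this.} The blue crossing graph may be spread out, e.g.\ $d$-regular as in the $(a,0;d,d,0)$-colourings of Section~2, so every vertex is exceptional. A vertex of $D$ may be blue to up to $t_1$ vertices of $C$, which is as many as there are images of $I_1$. So the slack in $\Delta(T)\le t_2-t_1$ absorbs nothing. What must be controlled is \emph{where} $I_1$ goes. When $T$ has no vertex with more than $t_2/4$ leaves, the paper does this with a random top-down embedding of $T-L_2$ and Hoeffding's inequality over the $I_1$-vertices with at most $t_2/(5\log t_2)$ leaf children. It gives a separate argument when some $s^*\in I_1$ has many leaves.

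Also, your sides are reversed: $I_2$, not $I_1$, must go on the side of size at least $t_2$.

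\textbf{The role of $\Delta(T)\le t_2-t_1$ is misplaced.} In the paper it is not per-centre slack for Hall. It is used when $T$ has a centre $c$ (more than $t_2/4$ leaves). There the stability lemma additionally gives $\Delta(G_r)<t_2$, so every blue degree is at least $t_2-1$. Place $c$ on a vertex $v\in C$ with the maximum number $d$ of blue neighbours in $D$; then $G_b[C]$ has minimum degree $t_2-1-d$. The degree bound on $c$ forces a dichotomy:
\begin{itemize}
\item either some $d$ components of $T-c$ have at least $t_1+2d$ vertices in total, and these can be routed into $G_b[D]$ through the $d$ blue edges at $v$ so the rest fits greedily in $G_b[C]$;
\item or $d$ components contain two vertices of $I_2$, which permits a red embedding in $G_r[C,D]$ using the $d$ blue neighbours of $v$ in $D$ as images of $I_2$-vertices not adjacent to $c$.
\end{itemize}
Your step ``revert to blue inside the larger clique, plus blue edges to reach size $t$'' is exactly where this argument is needed. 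Finally, the stability lemma itself, including the conclusion $\Delta(G_r)<t_2$ in the centre case, is substantial work that your proposal leaves as an unspecified ``counting or stability argument''.
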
 

The hypotheses of both these propositions imply that  $t_1=\rho t_2$ for some $\rho \le  \frac{1}{500}$, and the given number of vertices then satisfies $n \le 2.01 t_2$. We also have $2t_2-1 \le n$, with equality implying $\Delta(T) < t_2$. 

Suppose we are given $n$ and a $(t_1,t_2)$-tree $T$ satisfying these common conditions. We will first show (see Lemma~\ref{commonlem}) that if a $2$-colouring of the edges of $K_n$ does not yield a monochromatic copy of $T$, then it must be close in structure to the $(a,b;\delta, \Delta, \gamma)$-colourings of the previous section. That is, up to renaming the colours, the vertices can be partitioned into two cliques that are nearly all blue, with almost all the edges between the cliques coloured red.

We shall then complete our proofs by showing that, with the specific assumptions of each proposition, the red graph $G_r$ or the blue graph $G_b$ must contain $T$ as a subgraph.



\subsection{Some Common Machinery} 

Given a vertex $x \in V(T)$ and a component $K$ of $T - x$, the \emph{root} of $K$ is the neighbour of $x$ it contains. 
We let $s$ be a vertex of $T$ that minimizes the size of the largest component of $T-s$. Note that such a largest component $K$ has size at most $\frac{t_1 + t_2}{2}$, as otherwise the root $s'$ of $K$ would contradict the choice of $s$ (since every component of $T-s'$ is either $T-K$ or strictly contained in $K$). 
We root $T$ at $s$. 

We let $L_2$ be the leaves in $I_2$ and note that every vertex of $I_2-L_2$ has a child in $I_1$. So $I_2-L_2$ has at most $t_1$ vertices, and hence $|L_2| \ge t_2-t_1$ and $|V(T)-L_2| \le 2t_1$. 

When finding our copies of $T$ in $G_b$ or $G_r$, we will embed the relatively small subtree $T - L_2$, possibly with some of $L_2$ as well, in vertices of high degree. It will then remain to embed the remaining leaves of $L_2$, for which we will make use of the following well-known result on systems of distinct representatives.

\begin{theorem}(Hall's Theorem~\cite{Hl})
For any family ${\cal F}$ of subsets of a ground set $X$, there are distinct $x_F \in F$ for each $F$ in ${\cal F}$ if and only if there is no ${\cal F}' \subseteq {\cal F}$ such that
\[ |\cup_{F \in {\cal F}'} F| <|{\cal F}'|. \]
\end{theorem}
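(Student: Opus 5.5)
The statement is Hall's Theorem, which the paper cites rather than proves; I would prove it by the standard induction on $|\mathcal{F}|$, in the form due to Halmos and Vaughan. Throughout I assume $\mathcal{F}$ is finite, since that is how it is used here.

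\emph{Necessity.} Suppose distinct representatives $x_F \in F$ exist. Then for any subfamily $\mathcal{F}'$, the map $F \mapsto x_F$ is an injection from $\mathcal{F}'$ into $\bigcup_{F \in \mathcal{F}'} F$. Hence $|\bigcup_{F\in\mathcal{F}'}F| \ge |\mathcal{F}'|$, and no violating subfamily exists.

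\emph{Sufficiency.} I would induct on $|\mathcal{F}|$, assuming Hall's condition $|\bigcup_{F\in\mathcal{F}'}F|\ge|\mathcal{F}'|$ holds for every subfamily. The case $|\mathcal{F}|\le 1$ is trivial. For the inductive step I split into two cases.

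\textbf{Case 1.} Every nonempty proper subfamily $\mathcal{F}'$ satisfies the strict inequality $|\bigcup_{F\in\mathcal{F}'} F| \ge |\mathcal{F}'|+1$. Pick any $F_0$ and any $x \in F_0$; the set $F_0$ is nonempty by Hall's condition applied to $\{F_0\}$. Assign $x_{F_0}=x$ and delete $x$ from all remaining sets. Each union over a subfamily of $\mathcal{F}\setminus\{F_0\}$ loses at most one element, so Hall's condition still holds for the reduced family. Induction then supplies representatives for the rest, all different from $x$.

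\textbf{Case 2.} Some nonempty proper subfamily $\mathcal{F}'$ is tight, meaning $|U|=|\mathcal{F}'|$ where $U=\bigcup_{F\in\mathcal{F}'}F$. By induction, $\mathcal{F}'$ has a system of distinct representatives, which must use exactly the elements of $U$. Now consider the family $\{F\setminus U : F\in \mathcal{F}\setminus\mathcal{F}'\}$. I claim it satisfies Hall's condition. For a subfamily $\mathcal{G}$ of it,
\[
\Bigl|\bigcup_{G\in\mathcal{G}}(G\setminus U)\Bigr| = \Bigl|\bigcup_{G\in\mathcal{G}\cup\mathcal{F}'}G\Bigr| - |U| \ge |\mathcal{G}|+|\mathcal{F}'|-|\mathcal{F}'| = |\mathcal{G}|.
\]
By induction this family also has distinct representatives, and they lie outside $U$. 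Combining the two systems gives distinct representatives for all of $\mathcal{F}$.

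The only step needing care is Case 2: one has to use the union over $\mathcal{G}\cup\mathcal{F}'$ in the displayed inequality to transfer Hall's condition to the sets with $U$ removed. Everything else is routine bookkeeping.
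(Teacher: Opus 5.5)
Your argument is correct and complete. It is the standard Halmos--Vaughan induction, and the paper gives no proof of its own here (it only cites Hall~\cite{Hl}), so there is no alternative route to compare against. Two minor points are worth making explicit. First, your finiteness assumption is genuinely necessary, not a convenience: for $\mathcal{F}=\{\{1,2,3,\dots\}\}\cup\{\{i\}: i\ge 1\}$ no subfamily violates the cardinality condition, yet no system of distinct representatives exists. Second, $\mathcal{F}$ should be read as an indexed family, because your reductions (deleting $x$, removing $U$) can make two sets coincide, and the paper's own application in Corollary~\ref{cor:Hall} has identical sets for leaves sharing a parent.
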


In our setting, this yields the following corollary.

\begin{cor} \label{cor:Hall}
For any subset $L$ of $L_2$, given an embedding $\phi$ of $T-L$ into a graph $G$, we can extend $\phi$ to an embedding of $T$ if and only if there is no subset $L'$ of $L$ such that the union $U$ of the neighbourhoods in $G$ of the images of the parents of the leaves in $L'$ has size less than $|L'|+|\phi^{-1}(U)|$. 
\end{cor}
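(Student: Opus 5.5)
The plan is to apply Hall's Theorem directly to a family of sets indexed by the leaves in $L$. First observe that every $v \in L \subseteq L_2$ is a leaf lying in $I_2$. Its unique neighbour, its parent $p(v)$, therefore lies in $I_1$, so $p(v) \notin L$ and $p(v)$ is already embedded by $\phi$. (If $v$ happened to be the root $s$, we would read ``parent'' as its unique neighbour; the argument is unchanged.) Write $W = \phi(V(T - L))$ for the set of used vertices of $G$.

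Extending $\phi$ to all of $T$ means choosing, for each $v \in L$, an image $x_v \in V(G)$ with three properties: $x_v$ is adjacent to $\phi(p(v))$, $x_v \notin W$, and the $x_v$ are pairwise distinct. Since each $v \in L$ is a leaf, its only edge is to $p(v)$, so no other adjacencies need to be checked. Hence an extension exists if and only if the family $\mathcal F = \{F_v : v \in L\}$ has a system of distinct representatives, where
\[ F_v = N_G(\phi(p(v))) \setminus W . \]
We treat $\mathcal F$ as an indexed family, since several leaves may share a parent and so give identical sets. Hall's Theorem applies verbatim to indexed families: either apply it with index set $L$, or note that its standard proof never uses distinctness of the sets.

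By Hall's Theorem, the extension exists if and only if every $L' \subseteq L$ satisfies $\bigl|\bigcup_{v \in L'} F_v\bigr| \ge |L'|$. Let $U = \bigcup_{v \in L'} N_G(\phi(p(v)))$. Then $\bigcup_{v\in L'} F_v = U \setminus W$. Because $\phi$ is injective, $|U \cap W| = |\phi^{-1}(U)|$, and so
\[ \Bigl|\bigcup_{v \in L'} F_v\Bigr| = |U| - |\phi^{-1}(U)| . \]
Hall's condition fails for $L'$ exactly when $|U| < |L'| + |\phi^{-1}(U)|$, which is precisely the obstruction in the statement of Corollary~\ref{cor:Hall}.

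There is no real obstacle here. The only points needing care are checking that the parents of the leaves in $L$ are already embedded, and justifying the use of Hall's Theorem for a family with repeated sets.
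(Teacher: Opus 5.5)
Your proof is correct and is the direct application of Hall's Theorem that the paper intends (the paper states this corollary without a proof): you take the indexed family $N_G(\phi(p(v)))\setminus\phi(V(T-L))$ for $v\in L$, and injectivity of $\phi$ gives $|U\cap\phi(V(T-L))|=|\phi^{-1}(U)|$, which turns Hall's condition into exactly the stated obstruction. You also check the right details: the parents of leaves in $L\subseteq I_2$ lie in $I_1$ and so are already embedded, and Hall's Theorem has to be read for an indexed family because several leaves can share a parent.
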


For an immediate application of this corollary, observe that if we embed $T - L_2$ in such a way that the images of the vertices in $I_1$ have degree at least $t_2 + t_1$, then the condition is immediately satisfied, allowing us to complete the embedding. 
This implies that we cannot have too many vertices of high degree in any given colour class.

\begin{obs}
    If there is no copy of $T$ in $G_r$, then $G_r$
    has fewer than $2 t_1$ vertices of degree at least $\frac{n}{2}+2 t_1$. 
\end{obs}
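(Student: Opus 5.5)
The idea is to argue by contrapositive. Suppose $G_r$ has at least $2t_1$ vertices of degree at least $\frac{n}{2}+2t_1$; we embed $T$ in $G_r$, using these high-degree vertices as images of the internal part of the tree. Let $H$ be a set of $2t_1$ such vertices. Recall that $|V(T)-L_2|\le 2t_1$, and the root $s$ and the structure of $T-L_2$ are as fixed above.

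The first step is to embed $T-L_2$ injectively into $H$ as a subgraph of $G_r$. We build it greedily in a breadth-first order from the root $s$. When we embed a child $u$ of an already embedded vertex $w$, the image $\phi(w)$ has red degree at least $\frac n2+2t_1$. We need a red neighbour of $\phi(w)$ inside $H$ that is not yet used. This is the step I expect to be the main obstacle, since nothing guarantees that $H$ induces a dense red graph; so I would not insist on embedding all of $T-L_2$ into $H$. Instead:

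\begin{itemize}
\item embed only the vertices of $I_1$ (which are all internal, or leaves of $I_1$) into $H$;
\item let the vertices of $I_2-L_2$ go anywhere, as long as they are red common neighbours of the required images.
\end{itemize}

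The justification is as follows. Any two vertices of $H$ have red degrees summing to at least $n+4t_1$, so they have at least $4t_1$ common red neighbours. Hence, walking down the tree, each vertex $y\in I_2-L_2$ with parent $x\in I_1$ and children $x_1,\dots\in I_1$ can be handled in two ways. One option is to choose the children's images first in $H$ and then $\phi(y)$ as a common red neighbour of $\phi(x)$ and $\phi(x_1)$. That only works for one child, so the simpler option is this: pick $\phi(y)$ as an unused red neighbour of $\phi(x)$ with many red neighbours in $H$. Concretely, the number of red edges from $H$ to $V\setminus H$ is at least $2t_1(\frac n2+2t_1-2t_1)=t_1 n$, so by averaging many vertices outside $H$ see a large fraction of $H$. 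I would fall back on a cleaner route, described next.

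The cleaner route is to reorder using the condition of Corollary~\ref{cor:Hall} directly: embed the vertices of $I_1$ onto distinct vertices of $H$, and the vertices of $I_2$ of the subtree $T-L_2$ as common red neighbours. The counting above provides $\ge 4t_1 > |V(T)-L_2|$ common neighbours for any pair, which suffices whenever each $y\in I_2-L_2$ needs adjacency to its parent and to one child at a time, processing children sequentially via further common neighbours. To get adjacency to all children, I would instead embed each child $x'$ of $y$ after $y$, choosing $\phi(x')\in H$ as a red neighbour of $\phi(y)$. This requires $\phi(y)$ to have many red neighbours in $H$, which is ensured by choosing $\phi(y)$ among the common red neighbours of $\phi(x)$ and the still-unused part of $H$ with high red degree into $H$. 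This is where the most care is needed.

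Finally, with $I_1$ mapped into vertices of red degree at least $\frac n2+2t_1\ge t_2+t_1$ (as $n\ge 2t_2-1$ and $t_1\ge1$), the remark after Corollary~\ref{cor:Hall} applies. For any $L'\subseteq L_2$, the union $U$ of red neighbourhoods of the parents has $|U|\ge t_1+t_2\ge |L'|+|\phi^{-1}(U)|$, since $|\phi^{-1}(U)|\le t_1+t_2-|L_2|$. So the leaves of $L_2$ can be placed, giving a red copy of $T$, a contradiction.
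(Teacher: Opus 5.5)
There is a genuine gap. Your final Hall step is correct: once $I_1$ is mapped into vertices of red degree at least $\frac n2+2t_1\ge t_1+t_2$, every $L'\subseteq L_2$ satisfies $|U|\ge t_1+t_2\ge |L'|+|\phi^{-1}(U)|$. But you never actually embed $T-L_2$ in $G_r$ with $I_1$ inside $H$, which is the step you flag yourself, and neither of your suggestions closes it.

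\emph{Pairwise common neighbours.} This cannot work. A vertex $y\in I_2-L_2$ must be red-adjacent to the images of its parent and of \emph{all} its children simultaneously. Knowing $|N_r(h)\cap N_r(h')|\ge 4t_1$ for pairs says nothing about larger sets.

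\emph{Choosing $\phi(y)$ with many red neighbours in $H$.} Averaging does not justify this. The red edge count between $H$ and $V\setminus H$ controls only the average, not the neighbourhood of a particular $\phi(x)$. For instance, a vertex $h\in H$ may have exactly $\frac n2+1$ red neighbours outside $H$, while each other vertex of $H$ is red to all of $H$, to all of $V\setminus(H\cup N_r(h))$, and to $2t_1+2$ vertices of $N_r(h)\setminus H$, spread out. This is consistent with all degree bounds when $n\gtrsim 8t_1^2$. Then every red neighbour of $h$ outside $H$ sees at most two vertices of $H$. So a greedy procedure started at an arbitrary vertex of $H$ can get stuck; some vertices must be discarded first.

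The missing idea is this pruning step, which is exactly what the paper does. Each vertex of $H$ has at least $\frac n2+1$ red neighbours outside $H$, so $G_r[H,V\setminus H]$ has more than $t_1 n$ edges on $n$ vertices. Repeatedly deleting vertices of degree less than $t_1$ removes at most $(t_1-1)n$ edges. This leaves a nonempty subgraph $F$ of minimum degree at least $t_1$.

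Now embed $T-L_2$ greedily top-down in $F$, with $I_1$ on the $H$-side and $I_2-L_2$ on the other side. Since $|I_1|=t_1$ and $|I_2-L_2|\le t_1$, at most $t_1-1$ vertices on the target side are used whenever you place a child. Meanwhile the parent's image has at least $t_1$ neighbours there in $F$. In particular, a vertex $y\in I_2-L_2$ with many children causes no trouble, because $\phi(y)\in V(F)$ has at least $t_1$ $F$-neighbours in $H$. Your Hall argument then completes the red copy of $T$.
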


\begin{proof}
We assume the contrary  and embed $T$ for a contradiction. Let $S$ be a set of $2t_1$ vertices of degree at least $\frac{n}{2} + 2t_1$, and observe that the bipartite subgraph of $G_r[S, V-S]$ has average degree at least $2t_1$, and hence contains a subgraph of minimum degree $t_1$. We can greedily embed $T-L_2$ in this subgraph, placing $I_1$ in $S$, and then embed $L_2$ by Corollary~\ref{cor:Hall}.
\end{proof}

Applying the same argument in $G_b$, we obtain:  

\begin{obs} If there is no monochromatic copy of $T$ then there a set $Z$ of at most $4 t_1-2$ vertices such that each of $G_r-Z$, $G_b-Z$ has minimum degree at least $\frac{n}{2}-6t_1+2$ and maximum degree at most $\frac{n}{2}+2t_1$.    
\end{obs}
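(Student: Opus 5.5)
Let $Z_r$ be the set of vertices whose red degree is at least $\frac{n}{2}+2t_1$, and let $Z_b$ be the set of vertices whose blue degree is at least $\frac{n}{2}+2t_1$. Take $Z = Z_r \cup Z_b$. Since there is no monochromatic copy of $T$, the previous observation, applied in $G_r$ and then in $G_b$ (with the colours swapped), gives $|Z_r| \le 2t_1-1$ and $|Z_b| \le 2t_1-1$. Hence $|Z| \le 4t_1-2$, which is the claimed size bound.

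For the degree bounds, fix $v \notin Z$. By the choice of $Z$, $v$ has red degree and blue degree each less than $\frac{n}{2}+2t_1$ in the whole graph. Deleting $Z$ can only lower these degrees, so the maximum-degree bound holds in both $G_r - Z$ and $G_b - Z$.

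For the minimum degree, note that $v$ has degree $n-1$ in $K_n$. Its red degree is therefore $n-1-\deg_b(v) > n-1-\frac{n}{2}-2t_1 = \frac{n}{2}-2t_1-1$. Since degrees are integers, this means $\deg_r(v) \ge \frac{n}{2}-2t_1-\frac12$ when $n$ is odd, and in general $\deg_r(v) \ge \lceil \frac{n}{2}-2t_1 \rceil - 1$. Removing $Z$ deletes at most $4t_1-2$ further neighbours, so
\[ \deg_{G_r-Z}(v) \ge \frac{n}{2}-2t_1-1-(4t_1-2) = \frac{n}{2}-6t_1+1. \]
The symmetric argument gives the same bound for $G_b - Z$.

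The only delicate step is the off-by-one needed to reach $\frac{n}{2}-6t_1+2$ instead of $\frac{n}{2}-6t_1+1$. The strict inequality $\deg_b(v) < \frac{n}{2}+2t_1$ together with integrality gives $\deg_b(v) \le \lceil \frac{n}{2}+2t_1 \rceil - 1$. This yields $\deg_r(v) \ge n - \lceil \frac n2 + 2t_1\rceil = \lfloor \frac n2 \rfloor - 2t_1$, which is still only enough for $+1$, not $+2$, when $n$ is odd. I would recover the missing unit in one of two ways. One option is to use the slightly sharper threshold implicit in the proof of the observation: there, $2t_1$ vertices of degree at least $\frac n2+2t_1-1$ already give average degree about $2t_1$, so $Z$ can be defined with the threshold lowered by one. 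The other is to note that the count $|Z| \le 4t_1-2$ is slack by one whenever $Z_r \cap Z_b \neq \emptyset$. Failing both, the additive constant is immaterial for all later uses, and I would accept $\frac n2 - 6t_1 + 1$.
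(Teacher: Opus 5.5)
Your proposal is correct and is the paper's argument: apply the preceding observation to each colour and take $Z=Z_r\cup Z_b$. You are also right that this literally gives only $\lfloor n/2\rfloor-6t_1+2$, which is short for odd $n$ (e.g.\ $n=2t_2-1$), and your first fix closes that gap. With threshold $\frac n2+2t_1-1$, each of the $2t_1$ vertices of $S$ still has at least $\frac n2$ neighbours outside $S$, so the average degree is at least $2t_1$, and each still has degree at least $t_1+t_2$ for the Hall step; hence every $v\notin Z$ has $\deg_r(v)>\frac n2-2t_1$ and the $+2$ follows. Drop your second fix: $Z_r\cap Z_b=\emptyset$ always, since the two colour degrees of a vertex sum to $n-1<n+4t_1$.
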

 
We often restrict our embedding of $T-L_2$ to $V-Z$, which makes the applications of Hall's theorem easier as its vertices have degree at least $t_2-6t_1$ in both $G_r$ and $G_b$. How we carry out this embedding depends on the shape of $T$. We call a vertex $c$ of $T$ a \emph{center} if it is incident to more than $\frac{t_2}{4}$ leaves, noting that any such center must belong to $I_1$. We will typically first embed $c$ in some specially chosen vertex $v$ of degree at least $t_2$ in $G_r$ or $G_b$, and then attempt to embed $T-(L_2 \cap N(c))$ so that $t_1$ of its vertices lie outside $N(v)$. This allows us to embed $L_2 \cap N(c)$ arbitrarily in the unused neighbours of $v$.

On the other hand, if $T$ has no center, then we exploit this fact in two ways.

First, we order the vertices of $I_1 = \{s_1, s_2, \hdots, s_{t_1} \}$ by the number of leaf children they have, and adaptively embed these vertices one-by-one.
By doing so carefully, we will be able to apply Hall's Theorem to complete the embedding. 
In particular, we will use the lack of a center to deduce that for any $S \subseteq I_1$ containing at least one element from each pair in $\{(s_{2i-1},s_{2i}) : 2i \le |I_1|\}$, letting $\ell_i$ be the number of leaf children of $s_i$, we have $\sum_{j : s_j \in S} \ell_j > \tfrac12(|L_2|-\ell_1)> \tfrac{t_2}{3}$. 

Second, we note that since the root $s$ is itself not a center, it can have at most $\frac{t_2}{4}$ leaf children. This implies that the at most $t_1$ nonsingleton components of $T-s$ contain at least $\frac{3t_2}{4}$ vertices of $I_2$. Since the largest such component contains at most  $\frac{t_2+t_1}{2}$ elements of $I_2$, this means we can partition the components into two sets, with each set of components containing at least $\frac{t_2-2t_1}{4}$ elements of $I_2$.

With these preliminaries in place, we can now prove the following crucial lemma, which determines the approximate structure of a $2$-colouring of $K_n$ without monochromatic copies of $T$.

\begin{lemma}
\label{commonlem}
    If there is no monochromatic copy of $T$ then, renaming colours if needed, we can partition $V$ into $C$ and $D$ so that $G_r[C,D]$, $G_b[C]$ and $G_b[D]$ all have minimum degree at least $\frac{n}{2}-2t_1$, while $\Delta(G_b[C,D]) \le t_1$. Furthermore, if $T$ has a center then $\Delta(G_r)<t_2$. 
\end{lemma}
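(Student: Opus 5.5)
The plan is to extract the structure from the degree information in the second Observation and then use Hall's Theorem (Corollary~\ref{cor:Hall}) to show that neighbourhoods in each colour must ``line up''. Fix the set $Z$ of at most $4t_1-2$ vertices given by that Observation. Every $v \notin Z$ then has red and blue degree in $[\frac n2-6t_1+2,\frac n2+2t_1]$, which lies between roughly $t_2-6t_1$ and $t_2+3t_1$.

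\textbf{Key claim (red).} The core statement I would aim for is a dichotomy for pairs of vertices outside $Z$. For $u,w \notin Z$ that can serve as images of two vertices of $I_1$ in a red embedding (for instance, $u,w$ at red distance two), either $|N_r(u) \cap N_r(w)| \ge \frac n2 - O(t_1)$ or $|N_r(u)\cap N_r(w)| \le O(t_1)$. To prove it, suppose $|N_r(u)\cup N_r(w)| \ge t_2+Ct_1$ while the intersection is not tiny, and embed $T$ in red as follows.
\begin{itemize}
\item When $T$ has no center, split the components of $T-s$ into the two groups from the preliminaries, each containing at least $\frac{t_2-2t_1}{4}$ vertices of $I_2$.
\item Embed $T-L_2$ greedily in $V-Z$, using the high minimum degree. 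Place the $I_1$-vertices of one group into $N_r(\cdot)$-positions attached to $u$'s side and those of the other group to $w$'s side. Use the adaptive ordering $s_1,s_2,\dots$ by leaf counts, always putting one of each pair $(s_{2i-1},s_{2i})$ on each side.
\item Check Hall's condition in Corollary~\ref{cor:Hall}. Any violating $L'$ must involve parents on both sides, because one side alone already has at least $t_2-6t_1$ neighbours, and the pairing property gives $\sum_{s_j\in S}\ell_j>\frac{t_2}{3}$. Hence the union of neighbourhoods is about $|N_r(u)\cup N_r(w)|\ge t_2+Ct_1 \ge |L'|+|\phi^{-1}(U)|$.
\end{itemize}
The same argument applies in blue.

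\textbf{From the claim to the partition.} Pick $v\notin Z$ and let $D_0=N_r(v)\setminus Z$ and $C_0=N_b(v)\setminus Z$ (together with $v$). Applying the claim in both colours and counting, I would show that, up to renaming colours, red edges inside $C_0$ and inside $D_0$ are sparse at every vertex, so vertices in $C_0$ have red neighbourhoods essentially equal to $D_0$. The degree bounds then give that $G_b[C_0]$, $G_b[D_0]$ and $G_r[C_0,D_0]$ have minimum degree $\frac n2-O(t_1)$. Finally, put each vertex of $Z$ on the side where it has more blue neighbours, giving $C$ and $D$. To reach the exact constants ($\frac n2-2t_1$ and $\Delta(G_b[C,D])\le t_1$), argue directly: if some $x$ had more than $t_1$ blue neighbours across, embed $s$ or a suitable $I_1$-vertex at $x$ and use its blue neighbours in both cliques to get a blue $T$ via Hall, since the union of blue neighbourhoods would then exceed $t_2+t_1$. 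The minimum degree bounds then follow by complement from $|C|,|D|\approx \frac n2$, where $|C|$ and $|D|$ are pinned down since each side must have size less than $t_2+\dots$, else a blue $T$ embeds in the near-clique.

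\textbf{Center case.} If $T$ has a center $c$ and some $v$ has $d_r(v)\ge t_2$, I would embed $c$ at $v$. The rest of $T-(L_2\cap N(c))$ is embedded in red using the bipartite structure between $C$ and $D$, placing at least $t_1$ of its vertices outside $N_r(v)$ (vertices in $v$'s own side are available via red edges from the other side). The leaves of $c$ then fit into the unused vertices of $N_r(v)$, which gives a red $T$ and a contradiction.

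\textbf{Main obstacle.} I expect the hard step to be the Hall verification in the key claim: controlling $|\phi^{-1}(U)|$ and handling sets $L'$ whose parents straddle both sides, which is exactly where the pairing trick and the balanced split of components are needed.
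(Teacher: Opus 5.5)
\textbf{Main gap: the proof of your key claim.} Everything else rests on this claim, and the argument you give does not prove it. Hall's condition in Corollary~\ref{cor:Hall} concerns the union of the red neighbourhoods of the images of an \emph{arbitrary} set $W\subseteq I_1$ of parents. All but at most two of these images are vertices $x\neq u,w$, and nothing you know about the single pair $(u,w)$ constrains their neighbourhoods $N_r(x)$. So ``placing the $I_1$-vertices of one group on $u$'s side and the other on $w$'s side'' has no content. Nothing justifies that a violating $L'$ with parents in both groups sees about $|N_r(u)\cup N_r(w)|$ vertices.

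The two devices also conflict. The pairs $(s_{2i-1},s_{2i})$ are fixed by leaf counts and need not be split between your two groups of components of $T-s$; for instance, $s_1,s_2$ may lie in the same component. So you cannot in general impose both at once.

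What the pairing trick needs is a \emph{uniform} hypothesis. Each time the second vertex of a pair is embedded, some available red neighbour of its parent's image must have $\Omega(t_1)$ red neighbours outside the red neighbourhood of the first vertex's image, whatever that image was. This is why the paper's claim in the no-center case reads: for \emph{some} $v\in V-Z$, fewer than $\frac n2+8t_1$ vertices have a red edge to more than $3t_1$ blue neighbours of $v$.

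The negation of that claim, holding for every $v$, is what drives the pairing embedding. The one good $v$ it produces is then used globally, as follows.
\begin{enumerate}
\item Take the set $Y$ of at least $\frac n2-12t_1$ vertices whose red neighbourhoods lie almost inside $N_r(v)$.
\item Split it into $Y\cap N_r(v)$ and $Y-N_r(v)$; one of these has $O(t_1)$ vertices.
\item This yields a near-complete bipartite subgraph in one colour.
\item Clean it up with embeddings like the ones in your last step, where a vertex with many neighbours on both sides hosts $s$.
\end{enumerate}
Your pairwise dichotomy does hold a posteriori, since it follows from the lemma. It cannot, however, be obtained from an argument that looks at one pair at a time, so your route from the claim to the partition must be replaced by a global argument of this kind.

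\textbf{Second gap: the center case.} Your argument never produces the partition when $T$ has a center. The pairing estimate $\sum_{s_j\in S}\ell_j>\frac{t_2}{3}$ relies on $\ell_1\le\frac{t_2}{4}$. It fails when $c=s_1$ is a center, because $S$ may contain $s_2$ but not $s_1$. Your ``Center case'' paragraph only proves the ``Furthermore'' statement once $C,D$ already exist.

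The paper handles this case separately.
\begin{enumerate}
\item After renaming colours, take $v\notin Z$ with $d_b(v)\ge\lceil\frac{n-1}{2}\rceil$, and embed $c$ at $v$.
\item Suppose there are at least $2.01t_1t_2$ blue edges between $N_b(v)$ and $V-N_b(v)-Z-v$. Then this bipartite graph has a subgraph of minimum degree $t_1$. Embed $T-L_2-c$ there with $I_1-c$ outside $N_b(v)$, and Hall finishes the embedding. Extra care is needed when $n=2t_2-1$ and $d_b(v)=t_2-1$.
\item Otherwise one colour has few edges across $(N_b(v),V-N_b(v))$, which gives the initial near-bipartite subgraph.
\end{enumerate}

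\textbf{Minor point.} To get $\delta(G_r[C,D])\ge\frac n2-2t_1$ you need $|D|\le\frac n2+t_1$. The paper gets this from a greedy \emph{red} embedding in $G_r[C,D]$ with $I_1\subseteq C$. Your blue embedding in the near-clique only gives $|D|<t_2+2t_1$, which loses an extra $t_1$ in the constant.
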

\begin{proof}
We distinguish two cases.
   
\noindent{\bf Case 1:} $T$ has a center $c$.

Wlog $V-Z$ contains a vertex $v$ such that the degree of $v$  in $G_b$ is at least $\lceil n-1/2 \rceil \ge t_2-1$. Indeed either $v$ has degree at least $t_2$ or $n=2t_2-1$  and the degree of $v$ and every other vertex of $V-Z$ in $G_b$ is  $t_2-1$. Furthermore, in the second case, $\Delta(T)<t_2$.
Since $v$ is in $Z$ this  degree is at most $n/2+2t_1$.   If there 
are $2.01t_1t_2$ blue  edges between $N_b(v)$ and $V-N_b(v)-Z-v$ then $G_b[V-N_b(v)-v-Z,N_b(v)]$ has average degree exceeding $2t_1$ and contains a subgraph of minimum degree at least $t_1$ and average degree at least $2t_1$. We consider a maximal such subgraph of minimum degree $t_1$ and average degree $2t_1$ and note it  has at least $2t_1$ vertices on each side and so any vertex of $N_b(v)$ not contained in it is nonadjacent to more than $t_1$ vertices of its vertices outside $N_b(v)$. We embed $c$ in $v$ and $T_2-L_2-c$ in this subgraph with $I_1-c$ outside 
$N_b(v)$. Furthermore, if $v$ has degree $t_2-1$ in $G_b$ and there is a vertex $w$  in this subgraph which is not in $N_b(v)$ and which either has a blue edge to a vertex outside the subgraph or satisfies $|N_b(w) \cup N_b(v)|> t_2+t_1-3$
then we embed a vertex of $I_1-s$, all of whose children are leaves,  in $w$ as the first step of our embedding.  Since (i) every vertex of $G_b-Z$ has degree exceeding  $3t_2/4+t_1$, 
(ii) at least  $t_2/4$ vertices of $L_2$ are incident to $c$, and (iii) no vertices of $I_1$ are embedded in any of the at
least $t_2-1$ vertices of $N_b(v)$,  we can then embed $L_2$ by applying Hall's theorem to obtain a contradiction unless $|N_b(v)|=t_2-1$
and we could not find the desired $w$.We note that since, in this case,  all the vertices in the subgraph in $V-N_b(v)-Z$ have degree $t_2-1$ in $G_b$  this implies 
that for each such vertex $y$, $|N_b(v)-N_b(y)|<t_1$. Since our subgraph contains every vertex of $N_b(v)$ which is nonadjacent  in $G_b$ to 
fewer than $t_1$+1 vertices of the subgraph which are not in $N_b(v)$ this implies that the number of vertices 
in $N_b(v)$ which are not in the subgraph is less than the number of vertices outside $N_b(v)$ which are in the subgraph.
So every vertex of $G_b-Z$ sees a vertex of the subgraph. Now if there is a vertex $w'$ of  $V-Z-N_b(v)$  which is  not in the subgraph, then since we could not choose $w$,  $w'$ sees none of the subgraph outside $N_b(v)$ and hence sees a vertex of the subgraph in  $N_b(v)$. Furthermore, since it is not in the subgraph it sees at most $t_1-1$ such vertices and hence sees fewer than $t_2-t_1-2$   vertices in $N_b(v)$. 
Thus we can  embed a vertex of $I_1-s$ with only leaf children in $w'$, its parent in a vertex of the subgraph in $N_b(v)$ and proceed as before to 
to complete the embedding of $T$.  So, either there are at most $2.01t_1t_2$ blue edges between $N_b(v)$ and $V-N_b(v)-Z$ or every 
vertex of $V-N_b(v)-Z$ has blue edges to all but $t_1$ vertices of $N_b(v)$ and hence there are fewer than 
$2.01t_1t_2$ red edges between these two sets. Swapping colours if necessary and repeatedly deleting the  vertex  of maximum  degree in 
$G_b[V-N(v)-v-Z,N(v)-Z]$ until we have deleted $\sqrt{2.01t_2t_1}$ vertices we obtain a bipartite subgraph $G_r[C',D']$ of $G_r$  of  minimum degree $\frac{n}{2}-2\sqrt{2.01t_2t_1}-6t_1$ each of whose colour classes contains at least 
$\frac{n}{2}-\sqrt{2.01t_2t_1}-6t_1$ vertices.

If any vertex $v$ satisfies $|N_r(v)| \ge t_2,|N_r(v) \cap C'| \ge t_1,|N_r(v) \cap D'| \ge t_1$,  then we  choose a set $M$ of $t_2$  neighbours of $v$ in $G_r$ containing $t_1$ vertices of both $C'$ and $D'$.
Wlog we can insist that $M \cap C' \le \frac{t_2}{2}$ and $M \cap D' \ge \frac{t_2}{3}$. We embed $c$ in $v$ and  the roots of the nonsingleton components 
of $T-c$ in $D'$. We embed  the rest of $T_2-L_2$ in $G_r[C'-M,D']$ which implies  we embed none of $I_1$ in $M$. We can 
now embed  $L_2$ by applying Hall's Theorem. 

So any vertex $v$ satisfying  $|N_r(v)| \ge t_2$ satisfies either $|N_r(v) \cap C'| <t_1$ or $|N_r(v) \cap D'| < t_1$. 
In particular , $G_r[C']$ and $G_r[D']$ have maximum degree $2\sqrt{2.01t_2t_1}+6t_1$ and hence 
$G_b[C']$ and $G_b[D']$ have minimum degree $\frac{n}{2}-3\sqrt{2.01t_2t_1}-12t_1$. Furthermore, 
any vertex satisfying  $|N_r(v)| \ge t_2$ has at least $\frac{n}{2}-\sqrt{2.01t_2t_1}-7t_1$ blue edges to one of 
$C'$ or $D'$ and at least $t_2-2\sqrt{2.01t_2t_1}-13t_1$ red edges to the other.

If any vertex $v$ satisfies $|N_b(v)| \ge \frac{n-1}{2},|N_b(v) \cap C'| \ge t_1,|N_b(v) \cap D'| \ge t_1$, then we can choose 
a set $M$ of $\min(t_2,\lceil \frac{n-1}{2}\rceil)$ neighbours of $v$ in $ G_b$ containing $t_1$ vertices of both $C'$ and $D'$.  Wlog $M$  contains at most $t_2/2$ vertices of  $C'$ and at least $t_2/3$ vertices of $D'$. We embed $c$ in $v$ and $T-L_2$ in $G[C']$ using $M$ only for the neighbours of $v$ which 
we embed first. We can then apply Hall's Theorem to embed $L_2$.

So any vertex $v$ satisfying  $|N_b(v)| \ge \frac{n-1}{2}$ satisfies either $|N_b(v) \cap C'| <t_1$ or $|N_b(v) \cap D'| < t_1$. 
Thus, any such   vertex has at least $\frac{n}{2}-\sqrt{2.01t_2t_1}-7t_1$ red  edges to one of 
$C'$ or $D'$ and at least $t_2 -2\sqrt{2.01t_2t_1}-13t_1$ blue  edges to the other. 

We let $C$ be those vertices which have more blue neighbours in $C'$ then $D'$ and $D=V-C$. 
We can now repeat the arguments of the first and third of the last four paragraphs with $C$ in place of $C'$ and $D$ in place of $D'$. 
We see that every vertex either (i) has degree at least $t_2$ in $G_r$  and at most $t_1$ red edges to the element of $(C,D)$,
containing it or (ii) has degree at least $\frac{n-1}{2}$ in $G_b$  and at most $t_1$ blue  edges to the element of $(C,D)$,
which does not contain it. We note that if  a vertex $v$  in $G-Z$ satisfies  (i) then we can embed $T-L_2$ with the center in $v$, $I_2-L_2$ in the side not containing $v$ and $I_1-v_1$ in nonneighbours of $v$ in the side containing it. We can then embed $L_2$. So, every vertex satisfies (ii) and not (i), and  letting $d$ be the maximum degree of $G_b(C,D)$ we have  $d \le t_1$ and that  
$\frac{n}{2}-d \le \min(G_b(C),G_b(D)) \le |C|,|D| < \frac{n}{2}+d $.
\vskip0.2cm

\noindent{\bf Case 2:} $T$ has no center.

\begin{claim}
 For some  $v\in V-Z$ fewer than  $\frac{n}{2}+8t_1$ vertices   have a red edge to  more than  
$3t_1$ blue-neighbours of $v$    
\end{claim}
\begin{proof}
    We assume the contrary and embed $T$ for a contradiction. We  first embed the at most $2t_1$ nodes of $T-L_2$ in $G_r-Z$, exploiting the fact that 
    this graph has minimum degree at least $\frac{n}{2}-6t_1$ to ensure  that 
whenever we embed  the second of the pair $(s_{2i-1},s_{2i})$, we embed it in a vertex which is 
adjacent to $3t_1$ blue-neighbours of 
the vertex in which the other element of the pair  is embedded. Since we have embedded at most $2t_1$ 
vertices, by  Hall's theorem  either we can now embed 
$L$ or there is a subset $W$ of $I_1$, such that the union of the neighbourhoods of 
the images of the elements of $W$ has size at most the minimum of $t_2+t_1$ and  $2t_1$ plus  the number of leaves 
incident to elements of $W$. By our choice of vertex images, this implies that for each $i$, 
$W$ contains at most one of $s_{2i-1}$ or $s_{2i}$. Since $T$ has no center this implies that 
the number of leaves adjacent to  the elements of $W$ is at most  $\frac{2t_2}{3}$. But the image of every 
element of $W$  has degree at least $\frac{n}{2}-t_1>\frac{2t_2}{3}+2t_1$ in $G_r$. so no such $W$ 
exists and we can embed $T$.
\end{proof}

So, we can and do choose  a vertex $v$ in $V-Z$ such that there are at most  
$\frac{n}{2}+8t_1$ vertices  which have a red edge to  more than $3t_1$ blue    neighbours of $v$.  
Hence there is set  $Y$ of at least $\frac{n}{2}-12t_1$ vertices of $V-Z$ each of which has red edges to at most $3t_1$
vertices of $V-N_r(v)$ and hence has red edges to at least 
$\frac{n}{2}-5t_1$ elements of $N_r(v)$. Any  vertex in $Y \cap N_r(v)$ has a red edge to   at most 
$3t_1$ vertices  of $Y-N_r(v)$. Any vertex of $Y-N_r(v)$ has a red edge 
to all but  at most $7t_1$ vertices of $N_r(v)$ and hence of $Y \cap N_r(v)$. So, either  $|Y \cap N_r(v)|\le 14t_1$
or $Y - N_r(v) \le 6t_1$.

If $|Y \cap N_r(v)| \le 14t_1$ vertices, then $| Y-N_r(v)| \ge \frac{n}{2}-26t_1$ and $G_r[Y-N_r(v),N_r(v)]$ 
is a bipartite graph, such that 
each vertex of $Y-N_r(v)$ is nonadjacent to at most $7t_1$ vertices of $N_r(v)$. Hence we can delete
the $\sqrt{7.1t_2t_1}$ vertices of $N_r(v)$ with lowest degree in this graph, to obtain a 
a bipartite subgraph  of $G_r$  which has minimum degree exceeding $n/2-\sqrt{7.1t_2t_1}-26t_1$.

If $Y - N_r(v)$ contains at most $6t_1$ vertices,$|Y \cap N_r(v)| \ge \frac{n}{2}-18t_1$ and   $ G_b [Y \cap N_r(v),V-N_r(v)]$ 
is a bipartite graph  such that 
each vertex of $Y \cap N_r(v)$ is nonadjacent to at most $3t_1$ vertices of $V-N(v)$. Hence we can delete
the $\sqrt{3.1t_2t_1}$ vertices of $V-N_r(v)$ with lowest degree in this graph, to obtain a 
a bipartite subgraph   of $G_r$ which has minimum degree exceeding $n/2-\sqrt{3.1t_2t_1}-18t_1$

So, wlog $G_r$ contains a bipartite subgraph $G_r[ C',D']$   which has minimum degree exceeding $n/2-\sqrt{7.1t_2t_1}-26t_1$.

If any vertex  has red edges to  at least $t_1$ vertices on one sode of this subgraph and at least $t_2/3$ on the other, then we can embed $T$ in $G_b$  as follows. We embed  $s$ in this vertex,  all the leaf 
children of $s$ on the  side in which the vertex  has  more red neighbours, and the remaining neighbours of $S$ so that  for $X \in \{C',D'\}$, the total number of vertices of $I_2$ 
in the components  containing neighbours of $S$  embedded  in $X$ is at least $\frac{t_2}{4}-t_1$.
We can then finish the embedding greedily in $G_r[C',D']$. So such $s$ exists and in particular 
 $G_b[C']$ and $G_b[D']$ have minimum degree exceeding $n/2-\sqrt{7.1t_2t_1}-27t_1$. 

So,  if there is  a vertex $v$ in $V$ which has blue edges to at least $t_2/3$ vertices on one side of the 
subgraph and  at least $t_1$ vertices on the other   we can embed $T$ in $G_b$ as follows. 
We embed $s$ in this vertex, all the leaf 
children of $s$ on the  side in which the vertex  has  more blue  neighbours, and the remaining neighbours of $S$ so that  for $X \in \{C',D'\}$, the total number of vertices of $I_2$ 
in the components  containing neighbours of $S$  embedded  in $X$ is at least $\frac{t_2}{4}-t_1$.
We can then finish the embedding greedily in $G_b[C'] 
\cup G[D']$. So no such $s$ exists.

The last two paragraphs imply that every vertex has less than  $t_1$ blue edges to one of $C'$ 
and $D'$ and less than  $t_1$ red edges to the other. 
So, we can partition $V$ into $C$ and $D$ with $|C| \le |D|$ so that $G_r[C,D], G_b[C]$ and $G_b[D]$ all have minimum  degree at least  $n/2-\sqrt{3.1t_2t_1}-19t_1$.

We can now repeat the above arguments with $C$ and $D$ in the place of $C '$ and $D'$ to obtain that 
  $\Delta(G_b[C,D]), \Delta(G_r[B]),$ and $\Delta(G_r[D])$ are all less than $t_1$.  This implies $|D| \le \frac{n}{2}+t_1$, as otherwise we could embed $T$ greedily in $G_r[C,D]$ with $I_1 \in C$. So, $|C| \ge \frac{n}{2}-t_1$
and $\delta(G_b[C]), \delta(G_b[D]), \delta(G_r[C,D]) > \frac{n}{2}-2t_1$. 
\end{proof}

\subsection{The Proof of Proposition \ref{mainlem}}
We can now complete the proof of Proposition \ref{mainlem}.
To prove the proposition, we need to show that if   $t_2 \le 500t_1$ and $n=2t_2 + \max(2.5\rho t_1, 20\sqrt{t_1})$  then $T$ is a subgraph of one of $G_r$ or $G_b$.  
We assume the contrary and apply Lemma \ref{commonlem}  to obtain the partition $(C,D)$ of $V$ it guarantees exists. Without loss of generality we may assume $|C| \le |D|$, 
 so $|D| \ge t_2+ \max(1.25 \rho t_1, 10 \sqrt{t_1})$.
Again we distinguish two cases: 

{\bf Case 1: } $T$ has no center.

We recall  every vertex of $C$ (resp. $D$) has at most $t_1-1$ blue edges to $D$ (resp. $C$).
We claim we can embed $T-L_2$ in $G_r[C,D]$ with $I_1 \in C$ so that for every pair  in $\{(s_{2i-1},s_{2i}) : 2i \le |I_1|\}$,
$|(N_r(s_{2i-1}) \cup N_r(s_{2i})) \cap D| \ge t_2$. If this is true we can then embed $L_2$ by applying Hall's theorem.
So, it remains to prove the claim. We note that letting $s_j$ be the first node  of a pair embedded and $s_k$
the second it is enough to show that  the image  of $s_k$ has blue edges to at most $1.01\rho t_1$ vertices of $D-N_r(v_j)$
for the image $v_j$ of $s_j$. 

Now, $|D-N_r(v_j)| <t_1$ and each element of this set has less than $t_1$ blue edges to $C$. So the average number of blue edges from  a vertex of $C$ into this set is less than $\frac{t_1^2}{|C|}< 1.005\rho t_1$. So, more than $|C|/50$ vertices of 
$C$ have less than $1.04\rho t_1$ blue edges to this set. Since the parent of $s_k$ is embedded in a vertex of $D$ 
which has red edges to at least $\frac{248}{249}|C|$ vertices  of  $C$ and we are  embedding fewer than  $|C|/249$ vertices into $C$ we will be able to choose 
the image of $s_k$  so it has the desired property.

{\bf Case 2:} $T$ has a center $c$

We recall every vertex has degree exceeding $n-1-t_2$ in $G_b$. We note this implies that every vertex of $C$ 
has at least $\max(1.25\rho t_1, 10\sqrt{t_1})$ blue neighbours in $D$. 

We let $m<t_1$ be the number of nonsingleton components of $T-c$. If there is a vertex $v$  in one of $C$
or $D$ which is joined by $m$ blue edges to vertices in the other  then we embed $c$ in $v$, 
the nonleaf neighbours of $c$ in vertices joined to it by some of these edges, 
and the rest of $T-L_2$  in $G_b[X-N_b(v)]$ where $X$ is the element of $(C,D)$ not containing $v$. 
We can then finish off by applying Hall's theorem.  

So, every vertex of $C$ (resp. $D$) is joined by red edges to  all but at most $m<t_1$ vertices  of $D$ (resp. $C$). We embed $c$ in a vertex  $v$ of $C$. Our approach is to embed nonsingleton components of $T-c$  containing at least $t_1$ nonneighbours of $c$  in $G_b$  so that 
no nonneighbours of $c$ are embedded in $N(v)$.   We can then complete the embedding greedily if we embed the leaf neighbours of 
$c$ last. 

To this end, we consider the nonsingleton components of $T-c$ in decreasing order 
of size, embedding their roots in $N_b(v) \cap D$ until we use up this set.
If  the union of the components containing these roots contains  at least $t_1$ nonneighbours of $c$ then  we can embed these components  in $G_b[D]$ and are done.

We note that we have embedded the roots of at least $10 \sqrt{t_1}$ components of $T-c$,
so all the remaining components have size at most $\frac{\sqrt{t_1}}{10}+1$.

We will eventually  embed  in $G_b[D]$, the components whose roots have been  embedded there. First however
while  the union of these components with their roots removed and the vertices embedded in $D-N_b(v)$ is
less than $t_1$, if possible we embed the root of the component of some unembedded 
nonsingleton component of $T-c$ in $N_b(v) \cap C$ and the rest of the component in $G_b[D]$.
This will be possible unless  there is no unused vertex of $N_b(v) \cap C$  joined by blue edges 
to at least $\frac{\sqrt{t_1}}{10}$ unused vertices of $D$.

During this process the number of unembedded nonroots in the components of $T-c$  whose roots have 
been embedded in  $D$ is at least as large as the number of such roots. 
Thus the total number of blue edges from vertices of $C$ to used vertices of $D$ is less than  $t_1^2$.
Hence the average number of blue edges from a vertex of $C$ to a used vertex of $D$ is less than 
$1.005\rho t_1$ and thus for at least $\frac{|C|}{100}$ vertices of   $C$, more than  a hundredth of their
blue edges  to $D$ (and hence more than $\frac{\sqrt{t_1}}{10}$ such edges)  go to unused vertices. 
So, we will be able to embed $t_1$ vertices in $D-N_b(v)$. We can now finish the embedding in $G_b$ greedily, 
provided we embed the leaf children of $c$ last.

\subsection{The Proof of Proposition  \ref{mainlem3}}

We can now complete the proof of Proposition \ref{mainlem3}.
We need to show if    $t_2 \ge 500t_1$, then for every $(t_1,t_2)$-tree $T$ satisfying 
     $\Delta(T) \le t_2-t_1$,  and $n=2t_2-1$, $T$ is a subgraph of one of $G_r$ or $G_b$.

To prove this statement, we assume it fails  for some $T$  to obtain a contradiction. Hence, we can apply Lemma \ref{commonlem} to obtain the 
partition of $V$ into $C$ and $D$ which that  lemma ensures  exists. Wlog $|C| \le |D|$.

{\bf Case 1}: $T$ has a center $c$. 

We let $d$ be the maximum number of blue neighbours that a vertex of $C$ has in $D$ and $v$ be a vertex of $C$ with $d$ neighbours in $D$.
So, $G_b[C]$
has minimum degree $\frac{n-1}{2}-d$. If there are at most $d$ components of $T-c$ which contain 
two vertices of  $I_2$ then we embed $T$ in $G_b$ with $c$ in $v$ as follows.  By our bound on $\Delta(T)$, there are $d$ components of $T-c$ which  contain at least $2d+t_1$ vertices. We embed these components in $D$ and the remaining components in
$C$. 

So, we can assume that there are $d$ components of $T-c$ containing two vertices of $I_2$.  We  embed $T$ in $G_r[C,D]$ with $c$ in $v$ as follows. 
We embed $d$ disjoint paths from   a child to a grandchild of $c$ using the $d$ vertices of $D$ joined to $v$ by blue edges 
as images of the grandchildren. We can then embed the rest of $T$ greedily provided we embed the leaf children of $c$  last. 

{\bf Case 2}: $T$ has no center. 

In this case we consider the maximum degree $\Delta$ in $G_b[C,D]$ and let $v$ be a vertex achieving it. 
We recall $\Delta \le t_1$. 
We note that every vertex of $C$ has degree at least $|D|-\Delta \ge t_2-\Delta$ 
in $G_r[C,D]$. So, we can embed greedily in this graph unless $|D| < t_2+\Delta$ and hence $|C| \ge t_2-\Delta$. 
Thus, $\Delta>0$ and $G_r[C,D]$ has minimum degree at least $t_2-2\Delta$. 

 For a component $K$ of $T-s$, we let  $f(K)=|I_2 \cap K|-|I_1 \cap K|$.
 We note that the sum of $f(K)$ over the at most $t_1$  nonsingleton components of $T-S$ is at least $\frac{3t_2}{4}-t_1 >300t_1$
 and that no $f(K)$ exceeds $\frac{t_2+t_1}{2}$. So, we can find a set $S$  of  at most $\lceil\Delta/150\rceil$ components 
 of $T-s$ whose $f$ value is at least $2\Delta$ and at most $\frac{t_2+t_1}{2}$. If   there is a vertex $u$  which has red edges 
 to $\lceil\Delta/150\rceil$ vertices on its side, we embed $T$ in $G_r$ as follows. We embed  $s$ in $u$, the roots of the components in $S$ in some of these neighbours, and the roots of the remaining components of $T-s$ in the element of $(C,D)$ not containing $u$. We can now finish the embedding greedily in $G_r[C,D]$.

 So, we can assume that both $G_b[C]$ and $G_b[D]$ have minimum degree at least $t_2-\tfrac{151}{150}\Delta$.
 Thus, if there are $\Delta$ components of $T-s$ whose union has size at least $t_1+\tfrac{151}{150} \Delta$ then we can
 choose a subset whose union in addition has  size at most $\frac{t_2+t_1}{2}$ and 
 embed $T$ in  $G_b$ with $T-s$ in $G_b[C] \cup G_b[D]$. 

We note that since the average nonsingleton component of $T-s$ has size exceeding $\frac{3t_2}{4t_1} \ge 302$, this implies $t_1 \ge 300$
and so $t_2 \ge 150000$. Also, the $\Delta$ largest components of $T-s$ have size less than $3t_1$. 

We claim that if  we can embed $T-L_2$ in $G_r[C,D]$ with $I_1 \subseteq C$ so that every vertex of $D$ is a neighbour in this 
graph of the image of the parent of at least $\Delta$ leaves then $T$ is a subgraph of $G_r$.  To prove the claim we assume the contrary for a contradiction. Thus there must be a nonempty subset $L'$ of $L_2$ such that the union of the neighbourhoods of the images of the parents of the elements of $L'$ 
and the leaves in $L_2-L'$ has size less than   $t_2$.  Since every vertex of $C$ has more than $t_2-\Delta$ neighbours in 
$D$, it follows that $|L_2-L'|<\Delta$. Thus, every vertex of $D$ is in the neighbourhood of the image of the parent of  some element of $L'$ and since $|D| \ge t_2$ 
we obtain a contradiction. 

We consider a random process for embedding $T-L_2$ in $G_r[C,D]$ with $I_1 \subseteq C$. We embed a random uniformly chosen vertex 
of $I_1$ in  a random uniformly chosen vertex of $C$ and then embed $T-L_2$ in top down fashion always embedding a node in a random 
uniformly chosen unused neighbour of its parent.  For 
each vertex $v$ in $D$ we let $f(v)$ be the sum of the number of leaf children of the neighbours of $v$ in $G_r[C,D]$. 
By our claim, if we can show that with positive probability every $f(v)$ is at least $t_1$ then there is a copy of 
$T$ in $G_r$. By the union bound, we need only show that for every $v \in D$, the probability that $f(v)<t_1$ is at most $\tfrac{1}{2t_2}$. 

To this end, we focus on the set $P={p_1,...p_r}$ of vertices of $I_1$ which are the parents of at most 
$\frac{t_2}{5 (\log t_2)}$ leaves.  We let $\ell_i$ be the number of leaves which are children of $p_i$. We consider the sum $S_P$ of a family of independent random variables, $\{x_1,...,x_r\}$ where  $x_i$  is  $0$ with probability $1/10$ and otherwise is  $\ell_i$.

When we come to embed $p_i$, regardless of our choices so far, there are between  $t_2-3t_1$  and $t_2$ choices of 
vertices of $C$ in which to embed it and $v$ is adjacent in $G_r[C,D]$ to at least $t_2-4t_1$ of these choices. So the
probability  that $v$ is adjacent to the image of $p$ is much higher than $\frac{9}{10}$. Hence we need only show that the 
probability that the $x_i$ sum to  less than $t_1$ is at most $\frac{1}{2t_2}$.
To bound this probability, we apply Hoeffding's Inequality \cite[Theorem 2]{hoef} which implies the following: 

For any random variable $S=X_1+...+X_n$ where the $X_i$ are independent real valued  and  $a_i \le X \le b_i$,  
\begin{equation*}
\mathbb{P}\big(|S-\mathbb{E}(S)| > y \big)
\le 
2\exp\left({\frac{-2y^2}{\sum_{i=1}^n(b_i-a_i)^2}}\right).
\end{equation*}

This yields: for any random variable $S=X_1+...+X_n$ where the $X_i$ are independent nonnegative real valued  and  $X_i \le b_i \le b$,  
$$
\mathbb{P}\big(|S-\mathbb{E}(S)| > y \big)
\le 
2\exp\left({\frac{-2y^2}{b\sum_{i=1}^nb_i}}\right).
$$

We apply this with $S=S_P$, $b_i=\ell_i$, $b=\frac{t_2}{5 \log t_2}$, $y=\mathbb{E}(S_P)-t_1=\frac{9}{10}{\sum_{i=1}^r\ell_i}-t_1$. 

If $\sum_{i=1}^r \ell_i \ge \frac{t_2}{2}$  then $y>\tfrac{4}{5}\sum_{i=1}^r \ell_i$ and $b< \tfrac{2}{5\log t_2}\sum_{i=1}^r \ell_i$,
so the probability we are bounding  is less than $\frac{1}{2t_2}$.  So, we are done unless $\sum_{i=1}^r \ell_i < {t_2}/{2}$.
It follows that the $5 \log t_2$ largest components of $T-s$ contain at least $\frac{t_2}4-2t_1$ vertices.
So we have:  (i) $\Delta<5 \log t_2$,  and (ii) some node $s^*$ of $I_1$ has $\frac{t_2}{5 \log t_2}$ children.

Since $t_2 \ge 150,000$, $s^*$ has more than $\Delta$ children. It follows that if we embed $T-L_2$ in $G_r[C,D]$ and
are unable to extend this embedding to all of $T$  then the set $L'$ which demonstrates this must contain the leaf children of $s^*$. 

We embed $T-L_2$  with $I_1 \in C$ embedding $s^*$ first. 
Whenever we embed a vertex of $I_2-s^*$ we avoid the at most $\Delta^2$  vertices of $C$  which are nonneighbours in  $G_r[C,D]$ of
one of the at most $\Delta$ vertices of $C$ joined to the image of $s^*$ by a blue edge. It follows that $L'$ can only contain the leaf children  of $s^*$ which is impossible.

\section{Concluding Remarks}
\label{seccr}

In this paper, we studied to what extent Burr's conjecture holds. In our first result, Theorem~\ref{maintheorem}, we showed that for every $t_2 \ge 2t_1$, there are $(t_1, t_2)$-trees whose Ramsey number is significantly larger than conjectured, and further determined the order of magnitude of the difference between the largest possible Ramsey numbers and Burr's bound. It would be interesting to try to sharpen these results, and to determine whether the deranged stars we constructed maximise the Ramsey number.

In our second result we investigated how the maximum degree of the tree affects the validity of Burr's conjecture. The first part of Theorem~\ref{maintheorem1} asserts that when $t_2 \ge 500t_1$, Burr's conjecture holds for $(t_1,t_2)$-trees with maximum degree at most $t_2-t_1$. 
It  may be possible  to remove  the condition $t_2 \ge 500 t_1$ in this result  entirely. Even if this is impossible, $500$ is certainly  not best possible and it would be interesting to improve it to $t_2 \ge \eta t_1$ for $\eta$ significantly smaller than $500$.

The second part of Theorem~\ref{maintheorem1} shows that this bound on the maximum degree is essentially tight; for $t_2 \ge 2 t_1$, we construct $(t_1, t_2)$-trees of maximum degree $t_2 - t_1 + \tilde{O} \left( t_2^{2/3} \right)$ whose Ramsey number is significantly larger than Burr's bound, with the gap growing with the maximum degree. It remains to determine whether there are counterexamples of maximum degree $\Delta(T) = t_2 - t_1 + x$ for $x = o \left( t_2^{2/3} \right)$, or, for given $x = \tilde{\Omega}  \left( t_2^{2/3} \right)$, what the largest possible Ramsey number is.

Finally, one can ask for results in terms of the maximum degree, without fixing the sizes of the parts of the tree. As mentioned in the introduction, Montgomery, Pavez-Sign\'e, and Yan~\cite{MPY} prove that there is some constant $c > 0$ such that Burr's conjecture holds for any $t$-vertex tree $T$ with $\Delta(T) \le ct$. Our constructions when $t_2 = 2t_1$ show that we must have $c \le 1/3$, and it would be very interesting to see if this is best possible.

\bibliographystyle{habbrv}
\bibliography{Citations}

\end{document}